\newcommand{\eq}{\end{equation}}
\def\fa{\hbox{ for all }}
\def\span{\hbox{ span }}
\def\dfrac#1#2{\displaystyle{\frac{#1}{#2}   }}
\def\b1{\mathbf 1}
\newcommand{\R}{\ensuremath{\mathbb{R}}}
\newcommand{\N}{\ensuremath{\mathbb{N}}}
\def\calh{{\cal H}}
\def\biglf{\par\bigskip\noindent}
\newcommand{\red}[1]{{\color{red} #1 }}     
\newtheorem{Lemma}{Lemma}
\def\eref#1{(\ref{#1}%
)}
\def\RSref#1{\ref{#1}%
}
\def\RSlabel#1{\label{#1}%
}
\def\RScite#1{\cite{#1}%
}
\newcommand{\bql}[1]{%
\begin{equation}\label{#1}%
}
\def\filename#1{}
\def\dist{\mathrm{dist}}
\def\biglf{\par\bigskip\noindent}
\begin{document}
\begin{center}
  {\large \bf A Greedy Method for Solving Classes of PDE Problems}
\biglf 
Robert Schaback \\
\biglf
Draft of \today
\end{center}
Motivated by the successful use of greedy
algorithms for Reduced Basis Methods,
a greedy method
is proposed that selects $N$ input data in an asymptotically
optimal way to solve well-posed operator equations
using these $N$ data. The operator equations are defined
as infinitely many equations given via a compact set of functionals
in the dual of an underlying Hilbert space, and then the
greedy algorithm, defined directly in the dual Hilbert space,
selects $N$ functionals step by step. When $N$ functionals are
selected, the operator equation
is numerically solved by
projection onto the span of the Riesz representers of the functionals.
Orthonormalizing these yields useful Reduced Basis functions.
By recent results on greedy methods in Hilbert spaces,
the convergence rate
is asymptotically given by Kolmogoroff $N$-widths and
therefore optimal in that sense. However, these $N$-widths
seem to be unknown in PDE applications. Numerical experiments show
that for solving elliptic second-order Dirichlet problems,
the greedy method of this paper behaves like the known $P$-greedy
method for interpolation, applied to second derivatives. 
Since the latter technique is known to realize Kolmogoroff
$N$-widths for interpolation,  it is hypothesized
that the Kolmogoroff $N$-widths for solving second-order PDEs
behave like the  Kolmogoroff $N$-widths for second derivatives,
but this is an open theoretical problem.
\biglf
{\bf Keywords}: Operator equations, Greedy methods, Reduced Basis Methods,
Kolmogoroff $N$-widths,
Partial Differential Equations, meshless methods, collocation,
discretization, error bounds, well-posedness, stability, 
convergence
\biglf
{\bf Mathematics Subject Classification (2000)}:
65M12, 65M70, 65N12, 65N35,
65M15, 65M22, 65J10, 65J20, 35D30, 35D35, 35B65, 41A25, 41A63
\section{Introduction}\RSlabel{SecIntro}
For illustration of the application scope of this paper,
consider the class of all second-order
elliptic boundary value problems
\bql{eqLufg}
\begin{array}{rcl}
  Lu&=&f\;\hbox{ on } \overline{\Omega}\subset\R^d \\
  u&=& g\;\hbox{ on } \Gamma:=\partial\Omega
\end{array} 
\eq
with arbitrary Dirichlet data
and a fixed second-order strongly elliptic operator $L$ 
on a fixed bounded Lipschitz domain $\Omega\subset\R^d$.
We keep this problem class in strong form and pose it in Sobolev space 
$W_2^m(\R^d)$ with $m>2+d/2$ and spaces of data functions $f$ and $g$
of corresponding smoothness. Similar to Reduced Basis
(e.g. \RScite{maday-et-al:2002-1,binev-et-al:2011-1,%
buffa-et-al:2012-1,devore-et-al:2013-1}) or
Proper Orthogonal Decomposition methods
(e.g. \RScite{moore:1979-1,kunisch-volkwein:1999-1,gubisch-volkwein:2016-1}),
we focus on a {\em class}
of PDE problems, not on single problems. The output of this paper will
be connected to both areas, since a ``reduced''
orthonormal basis is produced
that is adapted to the given class of PDE problems.
\biglf
The next section will generalize such problems to operator equations defined by
sets $\Lambda$
of infinitely many functionals on an underlying Hilbert space of functions,
e.g. a Sobolev space.
In this context, well-posedness can be formulated, and for $N$ selected
functionals in a set $\Lambda_N\subset\Lambda$,
numerical solutions can be obtained by Hilbert space projection on
the Riesz representers of these functionals. This is the well-known
Rayleigh-Ritz idea. For kernel-based spaces, it coincides with
Symmetric Collocation and yields the optimal recovery technique
in Hilbert space for the given data functionals.
\biglf
Section \RSref{SecEAoPM} analyzes the error in terms of the
{\em Generalized Power Function}
$$
P_{\Lambda_N}(\lambda):=\dist(\lambda,\span(\Lambda_N)) 
$$
on the dual of the Hilbert space 
and introduces 
$$
\sigma_{\Lambda_N}:=\displaystyle{\max_{\lambda\in\Lambda}P_{\Lambda_N}(\lambda)   }\\ 
$$
that controls the error of the numerical solution in Hilbert space.
\biglf
The greedy method of Section \RSref{SecGM} now selects
$$
\lambda_{N+1}:=\hbox{ arg max} P_{\Lambda_N}(\lambda)
$$
and follows the results of the literature on Reduced Basis Methods,
linking the decay of $\sigma_{\Lambda_N}$ to Kolmogoroff $N$-widths.
\biglf
For operator equations defined on Hilbert spaces of functions
on a bounded domain $\Omega$, the additional
quantity
$$
\rho_{\Lambda_N}:=\displaystyle{\max_{x\in\overline\Omega}P_{\Lambda_N}(\delta_x)   }
$$
directly controls the pointwise and uniform error in the domain,
but is not useful for greedy methods. For well-posed problems,
Section \RSref{SecEA} will show that $\rho_{\Lambda_N}$ is bounded
form above by $\sigma_{\Lambda_N}$ up to a factor, and
numerical results in the final Section \RSref{SecNumRes}
suggest that this bound is asymptotically
sharp.   
\biglf
Before that, Section \RSref{SecSobCas} gives a partial analysis of
expectable Kolmogoroff $N$-widths for second-order Dirichlet
problems in Sobolev spaces $W_2^m$. It is hypothesized that
both $\sigma_{\Lambda_N}$ and the $N$-width behave like
${\cal O}(N^{-\frac{m-2-d/2}{d}})$ for $N\to \infty$, which is the
Kolmogoroff $N$-width for interpolation problems
in  $W_2^{m-2}$ with respect to the supremum norm.
Section \RSref{SecNumRes} shows supporting examples
and concludes the paper.
\section{Hilbert Space Theory}\RSlabel{SecHSTheory}
Following \RScite{hon-et-al:2003-1,schaback:2016-4}
the problem class is written in terms of
infinitely many constraints, each defined by a linear functional.
The functional sets
then are
\bql{eqLamdef}
\begin{array}{rcl}
\Lambda_1&:=&\{\delta_x\circ L\;:\;x\in \overline{\Omega}\}\\
\Lambda_2&:=&\{\delta_x\;:\;x\in \Gamma:=\partial {\Omega}\}
\end{array} 
\eq
combined into $\Lambda:=\Lambda_1\cup\Lambda_2$.
Since all single functionals are continuous
on $W_2^m(\R^d)$, the above sets are
images of compact sets by continuous maps, thus compact.
This brings us into line with the literature on reduced basis methods. 
\biglf
The above problems are {\em well-posed}
in the sense that there is a standard
{\em well-posedness inequality} of the form
\cite[1.5, p. 30]{braess:2001-1}
\bql{eqWP}
\|u\|_{\infty,\overline{\Omega}}\leq \|u\|_{\infty,\partial{\Omega}}
+C\|Lu\|_{\infty,\overline{\Omega}}\leq
(C+1)\sup_{\lambda\in\Lambda}|\lambda(u)|
\fa u\in C^2(\overline{\Omega})\cup C(\Gamma).
\eq
\subsection{Abstract Problem}\RSlabel{SecAP}
Generalizing this case,
and following \RScite{hon-et-al:2003-1,schaback:2016-4}, we
assume a Hilbert space $\calh$ and a subset $\Lambda$ of its dual $\calh^*$
that is {\em total} in the sense that
$$
\lambda(u)=0\fa \lambda \in \Lambda \hbox{ implies } u=0. 
$$
If we formally introduce the linear {\em data map}
$D_\Lambda\;:\calh\to \R^\Lambda$  with
$$
D_\Lambda(u):=\{\lambda(u)\}_{\lambda\in\Lambda} \fa u\
\in \calh,
$$
this means that elements $u\in \calh$  are uniquely identifiable
from their {\em data} $D_\Lambda(u)$. The central background
problem in this paper
is to recover elements $u$ from their data $D_\Lambda(u)$ in practice,
i.e. the approximate numerical inversion of the data map.
In view of the preceding example we assume that the set $\Lambda$ is compact.  
\biglf
The invertibility of the data map is quantified by assuming
a {\em well-posedness inequality}
\bql{equWP}
\|u\|_{WP}\leq C_{WP}\|D_\Lambda(u)\|_\infty=C_{WP}\sup_{\lambda\in
  \Lambda}|\lambda(u)|
\fa u\in \calh
\eq
in some {\em well-posedness} norm $\|.\|_{WP}$ on $\calh$
that usually is weaker than the  norm on $\calh$.
Recall that \RScite{schaback:2016-4} allows also to handle weakly formulated
problems as well this way. Furthermore, the framework applies to general
operator equations, including the case of interpolation if the operator
is the identity.
\section{Error Analysis of Projection Methods}\RSlabel{SecEAoPM}
For a finite subset $\Lambda_N:=\{\lambda_1,\ldots,\lambda_N\}$
of $\Lambda$ we can define the subspace
$$
L_N:=\span\{\Lambda_N\}\subseteq \calh^*
$$
and use the Riesz representers $v_{\lambda_1},\ldots, v_{\lambda_N}$
of $\lambda_1,\ldots,\lambda_N$ as trial functions. They span a space
$V_{\Lambda_N}$
that
will lead later to reduced bases.
\biglf
The standard optimal recovery of an element $u\in\calh$ from finite data
$\lambda_1(u),\ldots,\lambda_N(u)$ then proceeds by Hilbert space projection,
i.e. by solving the linear system
$$
\lambda_k(u)=\displaystyle{\sum_{j=1}^N\alpha_j\lambda_k(v_{\lambda_j})
  =\sum_{j=1}^N\alpha_j(v_{\lambda_k},v_{\lambda_j})_\calh
  =\sum_{j=1}^N\alpha_j(\lambda_k,\lambda_j)_{\calh^*}
},\;1\leq k\leq N 
$$
to get the numerical approximation
$$
\tilde u_N:=\sum_{j=1}^N\alpha_jv_{\lambda_j}.
$$
This satisfies the orthogonality relation
\bql{eqorthou}
\|u\|_\calh^2=\|u-\tilde u_N\|_\calh^2+\|\tilde u_N\|_\calh^2
\eq
that implies uniform stability in Hilbert space.
In case of the example in the beginning,
the method is known as {\em Symmetric Collocation}. This is a numerical
technique \RScite{fasshauer:1997-1}
based on \RScite{wu:1992-1} with certain
optimality properties \RScite{schaback:2015-3}
and a convergence theory
\RScite{franke-schaback:1998-1,franke-schaback:1998-2a}. Within the Hilbert
space framework, it produces pointwise optimal approximations to the
true solution under all possible methods that use the same data
\RScite{schaback:2015-3}.

\subsection{Power Function}\RSlabel{SecGPF}
The standard error analysis in Hilbert Spaces uses
the {\em generalized Power Function} defined as  
\bql{eqPFdef} 
P_{\Lambda_N}(\mu):=\displaystyle{
  \min_{\lambda\in L_N}\|\mu-\lambda\|_{\calh^*}   }=:\dist
(\mu,L_N)_{\calh^*} \fa \mu \in \calh^*.
\eq
This is continuous and attains its maximum on the compact set $\Lambda$.
In particular, we are interested in
\bql{eqsigmadef}
\sigma_{\Lambda_N}
:=\sup_{\lambda\in \Lambda}P_{\Lambda_N}(\lambda)=\sup_{\lambda\in \Lambda}\dist
(\lambda,L_N)_{\calh^*}.
\eq
\subsection{Error Analysis}\RSlabel{SecEA}
This quantity leads to error bounds for the discretized
recovery problem that we described in the beginning of this section.
\begin{Lemma}\RSlabel{Lemsigma}
  Let $u\in \calh$ supply the finite data $\lambda_1(u),\ldots,\lambda_N(u)$
  that is used to construct $\tilde u_N$ by projection, and assume
  well-posedness
  in the sense of \eref{equWP}. Then
$$
\begin{array}{rcl}
  \|u-\tilde u_N\|_{WP}
  &\leq &C_{WP}\sigma_{\Lambda_N}\|u\|_\calh.
\end{array}
$$
\end{Lemma}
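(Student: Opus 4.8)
The plan is to apply the well-posedness inequality \eqref{equWP} to the error $u-\tilde u_N\in\calh$ and then to control the resulting supremum over $\Lambda$ by the generalized power function. First I would write, directly from \eqref{equWP},
$$
\|u-\tilde u_N\|_{WP}\leq C_{WP}\sup_{\lambda\in\Lambda}|\lambda(u-\tilde u_N)|,
$$
so that the whole statement reduces to estimating $|\lambda(u-\tilde u_N)|$ uniformly for $\lambda\in\Lambda$.

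The crux of the argument is the data-reproduction property of the projection: by construction $\lambda_k(\tilde u_N)=\lambda_k(u)$ for $1\leq k\leq N$, hence $\mu(u-\tilde u_N)=0$ for every $\mu\in L_N=\span\{\Lambda_N\}$. This lets me subtract an arbitrary element of $L_N$ at no cost before estimating: for any $\lambda\in\Lambda$ and any $\mu\in L_N$,
$$
|\lambda(u-\tilde u_N)|=|(\lambda-\mu)(u-\tilde u_N)|\leq\|\lambda-\mu\|_{\calh^*}\,\|u-\tilde u_N\|_\calh,
$$
by the duality (Cauchy--Schwarz) bound. Taking the infimum over $\mu\in L_N$ turns the first factor into $P_{\Lambda_N}(\lambda)$ by definition \eqref{eqPFdef}, and the supremum over $\lambda\in\Lambda$ then yields
$$
\sup_{\lambda\in\Lambda}|\lambda(u-\tilde u_N)|\leq\sigma_{\Lambda_N}\,\|u-\tilde u_N\|_\calh,
$$
using \eqref{eqsigmadef}.

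Finally I would invoke the orthogonality relation \eqref{eqorthou}, which gives $\|u-\tilde u_N\|_\calh\leq\|u\|_\calh$, and substitute back through the two displays above to reach the claimed bound. I do not expect a genuine obstacle: the argument is a short chain of a priori estimates, and the only thing that really needs to be noticed is that the data-reproduction property is precisely what licenses inserting an arbitrary $\mu\in L_N$, which is where the power function --- and hence $\sigma_{\Lambda_N}$ --- enters. Compactness and totality of $\Lambda$ play no role in the inequality itself, since the bound is uniform in $\lambda$; they only guarantee elsewhere that the supremum defining $\sigma_{\Lambda_N}$ is finite and attained.
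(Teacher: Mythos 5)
Your proof is correct and follows essentially the same route as the paper's: well-posedness applied to $u-\tilde u_N$, insertion of an arbitrary $\mu\in L_N$ licensed by the data-reproduction property, the duality bound producing $P_{\Lambda_N}(\lambda)$ and hence $\sigma_{\Lambda_N}$, and the orthogonality relation \eref{eqorthou} to replace $\|u-\tilde u_N\|_\calh$ by $\|u\|_\calh$. The paper's proof is merely a terser version of the same chain, so nothing further is needed.
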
 
\begin{proof}
Since $u$ and $\tilde u_N$ share the same data, the assertion follows from
\bql{eqstandEB}
\begin{array}{rcl}
  \|u-\tilde u_N\|_{WP}
  &\leq &C_{WP}\sup_{\lambda\in \Lambda}|\lambda(u)-\lambda(\tilde u_N)|\\
  &= &C_{WP}\sup_{\lambda\in \Lambda}|(\lambda-\mu)(u-\tilde u_N)|\fa \mu\in L_N
\end{array}
\eq
using orthogonality \eref{eqorthou} and well-posedness \eref{equWP}.
\end{proof}
\noindent Therefore we are interested to find sets $\Lambda_N$ that minimize
$\sigma_{\Lambda_N}$ under all sets of $N$ functionals from $\Lambda$.
\biglf
By the same argument, for all continuous
test functionals $\mu$ the inequality
\bql{eqpwmu}
 |\mu(u)-\mu(\tilde u_N)|\leq P_{\Lambda_N}(\mu)\|u\|_\calh,
\eq
 holds and we can check derivative errors once we evaluate
 the Generalized Power Function on derivative functionals.
 Taking all functionals of $\Lambda$ here, we get
 $$
 \|D_\Lambda(u-\tilde u_N)\|_\infty
 =\displaystyle{\sup_{\lambda\in \Lambda}|\lambda(u-\tilde u_N)|
 \leq \sigma_{\Lambda_N}} \|u\|_\calh,
 $$
 i.e. $\sigma_{\Lambda_N}$ bounds the error in the data norm.
\biglf
In case of a Hilbert space $\calh$ containing
continuous functions on some compact set
$\overline\Omega$, there is a special case of
\eref{eqpwmu} in the pointwise form
\bql{eqpwerrbnd}
\begin{array}{rcl}
  |u(x)-\tilde u_N(x)|
 &\leq & P_{\Lambda_N}(\delta_x)\|u\|_\calh
\end{array}
\eq
that may be checked by evaluation of the Generalized Power Function
over all delta functionals for all points of the domain. If we define
\bql{eqrhodef}
\rho_{\Lambda_N}:=\displaystyle{\max_{x\in\overline\Omega}P_{\Lambda_N}(\delta_x)   } 
\eq
we get a uniform bound
\bql{eqrhoEB} 
\|u-\tilde u_N\|_{\infty,\overline\Omega}\leq \rho_{\Lambda_N}\|u\|_\calh
\eq
that is numerically available, and similar to the  well-posedness inequality
\eref{eqWP}.
\biglf
By definition, the Generalized
Power Function decreases at all functionals when we
extend the set $\Lambda_N$. All of these error bounds will then improve.
For
fixed
$\Lambda_N$ and $\calh$, they are optimal, but here we are interested in
finding good finite subsets $\Lambda_N$ of $\Lambda$.
\biglf
Note that both Lemma \RSref{Lemsigma} and \eref{eqrhoEB} furnish
similar error bounds with associated convergence rates,
and they might have the same behavior if the well-posedness norm
$\|.\|_{WP}$ coincides with  $\|.\|_{L_\infty(\overline\Omega)}$.
We shall have a closer look at this now, delaying experiments to
Section \RSref{SecNumRes}.
\biglf
To find a connection between $\rho_{\Lambda_N}$ and $\sigma_{\Lambda_N}$
in well-posed situations, we assume that the
well-posedness norm $\|.\|_{WP}$ of \eref{equWP} can be  written
as
\bql{eqWPgen}
\|u\|_{WP}=\displaystyle{\sup_{\mu\in M}|\mu(u)|,\;u\in \calh   } 
\eq
with a compact set $M\subset\calh^*$. This yields \eref{eqWP}
when taking $M$ as the set of delta functionals on $\overline\Omega$.
The generalization of \eref{eqrhodef} then is
\bql{eqrhoMdef}
\rho_{M,\Lambda_N}
:=\displaystyle{\max_{\mu \in M}P_{\Lambda_N}(\mu)   }. 
\eq
\begin{Lemma}\RSlabel{Lemrhosig}
  Assuming \eref{equWP} and \eref{eqWPgen}, we have
  $$
\rho_{M,\Lambda_N}\leq C_{WP}\sigma_{\Lambda_N}.
  $$
\end{Lemma}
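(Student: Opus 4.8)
The plan is to fix an arbitrary $\mu\in M$ and prove the pointwise bound $P_{\Lambda_N}(\mu)\le C_{WP}\sigma_{\Lambda_N}$; since the right-hand side is independent of $\mu$, taking the maximum over the compact set $M$ then gives $\rho_{M,\Lambda_N}\le C_{WP}\sigma_{\Lambda_N}$ directly from definition \eqref{eqrhoMdef}. The whole task thus reduces to estimating a single value of the Generalized Power Function, and the device I would use is to manufacture one specific test element of $\calh$ on which the well-posedness inequality \eqref{equWP} can be applied.

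Let $v_\mu\in\calh$ be the Riesz representer of $\mu$, and let $w$ be its orthogonal projection onto the orthogonal complement $V_{\Lambda_N}^{\perp}$ of the trial space $V_{\Lambda_N}$. Because the Riesz isometry maps $L_N$ onto $V_{\Lambda_N}$, this element satisfies $\|w\|_\calh=\dist(v_\mu,V_{\Lambda_N})_\calh=\dist(\mu,L_N)_{\calh^*}=P_{\Lambda_N}(\mu)$. Since $w\perp V_{\Lambda_N}$ we also get $\lambda_j(w)=(w,v_{\lambda_j})_\calh=0$ for $1\le j\le N$, so $w$ carries zero data on $\Lambda_N$; in particular its projection vanishes and $w$ equals its own recovery error. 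Finally, writing $v_\mu=w+(v_\mu-w)$ with $v_\mu-w\in V_{\Lambda_N}$ orthogonal to $w$ yields the crucial identity $\mu(w)=(w,v_\mu)_\calh=\|w\|_\calh^2$.

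Now I would feed $w$ into both norm representations. By \eqref{eqWPgen} and then well-posedness \eqref{equWP},
$$\|w\|_\calh^2=\mu(w)\le\|w\|_{WP}\le C_{WP}\sup_{\lambda\in\Lambda}|\lambda(w)|.$$
For every $\lambda\in\Lambda$ and every $\nu\in L_N$ we have $\nu(w)=0$, hence $\lambda(w)=(\lambda-\nu)(w)$ and $|\lambda(w)|\le P_{\Lambda_N}(\lambda)\,\|w\|_\calh\le\sigma_{\Lambda_N}\|w\|_\calh$, which is exactly the distance estimate already exploited in the proof of Lemma \ref{Lemsigma}. Combining the two displays gives $\|w\|_\calh^2\le C_{WP}\sigma_{\Lambda_N}\|w\|_\calh$, and dividing by $\|w\|_\calh$ (the case $w=0$ being trivial) delivers $P_{\Lambda_N}(\mu)=\|w\|_\calh\le C_{WP}\sigma_{\Lambda_N}$.

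The only genuinely delicate point is the choice of test element: the orthogonal component $w$ of the Riesz representer is engineered so that it simultaneously has $\calh$-norm equal to $P_{\Lambda_N}(\mu)$, carries vanishing data on $\Lambda_N$ (so the bound of Lemma \ref{Lemsigma} transfers verbatim), and satisfies $\mu(w)=\|w\|_\calh^2$, which is what turns the lower estimate from \eqref{eqWPgen} into the Power Function itself. Equivalently, one could observe that \eqref{eqpwmu} is in fact sharp, so that $P_{\Lambda_N}(\mu)=\sup_{\|u\|_\calh\le 1}|\mu(u-\tilde u_N)|$, apply \eqref{eqWPgen} and \eqref{equWP} to the recovery error $u-\tilde u_N$ of a norming sequence, and pass to the supremum; the resulting computation is identical. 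Once this element is identified, every remaining inequality is one already established in the excerpt, so I expect no further obstacle.
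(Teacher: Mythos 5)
Your proof is correct and follows essentially the same route as the paper: both reduce the claim to the chain $\|\cdot\|_{WP}\le C_{WP}\sup_{\lambda\in\Lambda}|\lambda(\cdot)|\le C_{WP}\,\sigma_{\Lambda_N}\|\cdot\|_{\calh}$ applied to an element with vanishing data on $\Lambda_N$. The only difference is presentational: the paper tests the residual functional $\tilde \mu_N-\sum_{j}(\tilde\mu_N,\mu_j)\mu_j$ against an arbitrary $u\in\calh$ and implicitly takes the supremum over the unit ball, whereas you exhibit the extremal element $w$ (the component of the Riesz representer orthogonal to $V_{\Lambda_N}$) explicitly, turning that supremum into the identity $P_{\Lambda_N}(\mu)=\|w\|_{\calh}$.
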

\begin{proof}
The maximum in \eref{eqrhoMdef} is attained at some $\tilde \mu_N\in M$
and then
$$
 \begin{array}{rcl}
 \rho_{M,\Lambda_N}
  &=&
  P_{\Lambda_N}(\tilde \mu_N)\\
  &=&\dist(\tilde \mu_N,\span \Lambda_N) \\
  &=&
  \displaystyle{
    \left\|\tilde \mu_N-\sum_{j=1}^N(\tilde \mu_N,\mu_j)\mu_j\right\|_{\calh^*}}.  
\end{array}
$$
 Applied to an arbitrary $u\in\calh$ this yields
 $$
 \begin{array}{rcl}
   \left|\tilde \mu_N(u)-\sum_{j=1}^N(\tilde \mu_N,\mu_j)\mu_j(u)\right|
   &=&
   \left|\tilde \mu_N(u)-\tilde \mu_N(\tilde u_N)\right|\\
   &\leq &
   \sup_{\mu\in M}\left|\mu(u)-\mu(\tilde u_N)\right|\\
   &=& \|u-\tilde u_N\|_{WP}\\
   &\leq& C_{WP} \sup_{\lambda\in \Lambda} |\lambda(u)-\lambda(\tilde u_N)|\\
   &\leq & C_{WP} \sigma_{\Lambda_N}\|u\|_\calh
 \end{array}
 $$
 where the last line follows like \eref{eqpwmu}
 using a generalization of the argument within \eref{eqpwerrbnd}.
\end{proof}

\section{Greedy Method}\RSlabel{SecGM}
Given a set $\Lambda_N:=\{\lambda_1,\ldots,\lambda_N\}$,
the {\em $P$-greedy algorithm}
\RScite{DeMarchi-et-al:2005-1,santin-haasdonk:2017-1}
in its abstract form defines $\lambda_{N+1}$ recursively via
\bql{eqGdef} 
P_{\Lambda_N}(\lambda_{N+1})=\sup_{\lambda\in \Lambda}P_{\Lambda_N}(\lambda).
\eq
In the context of Reduced Basis Methods in Hilbert spaces,
this algorithm coincides
with the one in \cite{maday-et-al:2002-1,binev-et-al:2011-1,%
buffa-et-al:2012-1,devore-et-al:2013-1}. The main difference is that
we work in the dual here, focusing on a single class of PDEs
in applications instead of a parametrized family.
A first application of Reduced Basis Methods
within a Reproducing Kernel Hilbert Space 
setting is in \RScite{chen-et-al:2016-1}, implementing a
greedy method based on discrete
least-squares, not on the dual of the basic Hilbert space of functions. 
\biglf
The cited papers provide a useful error analysis of the method that we
state here for completeness. The main ingredient is
the {\em Kolmogorov $N$-width}
$$
d_N(\Lambda):=\inf_{\hbox{all } H_N}\sup_{\mu\in
  \Lambda}\inf_{\lambda\in H_N}
\|\lambda-\mu\|_{\calh^*}=\inf_{\hbox{all } H_N}\sup_{\mu\in
  \Lambda}\dist(\mu,H_N)_{\calh^*}
$$
where the first infimum is taken over all $N$-dimensional subspaces $H_N\subseteq
\calh^*$, not only those spanned by $N$ functionals from $\Lambda$.
Then \RScite{devore-et-al:2013-1} proves
$$
\sigma^2_{2N}(\Lambda)\leq 2 d_N(\Lambda),
$$
while \RScite{binev-et-al:2011-1} has
$$
\sigma_{\Lambda_N}\leq C(\alpha)N^{-\alpha} \hbox{ for }  n\in \N,
\hbox{ if }  d_{N}(\Lambda)\leq C'(\alpha)N^{-\alpha} \hbox{ for }  n\in \N
$$
with suitable constants. This links the behaviour of the
$P$-greedy algorithm to Kolmogoroff $N$-widths. 
The paper \RScite{santin-haasdonk:2017-1} exploits this connection
for function recovery
by interpolation, while this paper extends \RScite{santin-haasdonk:2017-1}
to classes of operator equations, including PDE solving.
\biglf
Consequently, the greedy method converges
roughly like the Kolmogoroff $N$-widths.
Asymptotically, there are no better choices for selecting $N$ functionals
out of $\Lambda$.
Note that this works for all well-posed problems
stated in abstract form in Hilbert space
via infinitely many constraints. The differential and boundary operators
can be easily generalized.
\biglf
On the downside, the literature does not provide much information
about the Kolmogoroff $N$-widths in such situations. This is why
on has to look at special cases. We postpone this to Section
\RSref{SecSobCas}.
\biglf
If we run the greedy algorithm numerically, we
should get very good candidates for reduced bases.
Choosing them with additional orthogonality properties will
then bring us close to Proper Orthogonal Decomposition methods.
The next section will explain how to do that. 
\subsection{Implementation}\RSlabel{SecImpl}
Throughout, we assume that the Hilbert space $\calh$ has a kernel $K$
such that inner products in $\calh^*$ can be numerically
calculated via
$$
(\lambda,\mu)_{\calh^*}=\lambda^x\mu^yK(x,y) \fa \lambda,\,\mu\in \calh^*
  $$
where the upper index stands for the variable the functional acts on.
In case of Sobolev spaces $W_2^m(\R^d)$ with $m>d/2$, we use the standard
Whittle-Mat\'ern kernel
$$
K_{m,d}(x,y)=\|x-y\|_2^{m-d/2}K_{m-d/2}(\|x-y\|_2),\;x,y\in\R^d
$$
with the modified Bessel function $K_\nu$ of second kind.
In what follows, we treat $\Lambda$ as being very large and finite,
but extensions to infinite compact $\Lambda$ will be possible
if functions on $\Lambda$ are discretized somehow. 
\biglf
A direct way to assess the generalized Power Function at $\lambda$ for
given $\Lambda_N:=\{\lambda_1,\ldots,\lambda_N\}$ is to use its definition
via the approximation problem
$$
\displaystyle{  \inf_{\alpha\in\R^N}\left\|\lambda-
  \sum_{j=1}^N\alpha_j\lambda_j\right\|^2_{\calh^*}   }=P_{\Lambda_N}^2(\lambda).
$$
We assume that we have turned $\lambda_1,\ldots,\lambda_N$
into an $\calh^*$-orthonormal
basis $\mu_1,\ldots,\mu_N$ already, and then the solution is
\bql{eqGPFll}
P_{\Lambda_N}^2(\lambda)=\displaystyle{ \left\|\lambda-
\sum_{j=1}^N(\lambda,\mu_j)_{\calh^*} \mu_j\right\|^2_{\calh^*}
=(\lambda,\lambda)_{\calh^*} -\sum_{j=1}^N(\lambda,\mu_j)^2_{\calh^*}}.
\eq
Furthermore, we store the orthonormalization
system for getting the $\mu_j$ from the $\lambda_j$ as
\bql{eqmufromlambda}
\mu_k=\displaystyle{\sum_{j=1}^{k-1}c_{kj}\mu_j+c_{k,k}\lambda_k,\;1\leq k\leq
  N.   }
\eq
Besides the $N\times N$ triangular matrix $C$, we store the values
$\{(\lambda,\lambda)_{\calh^*}\}_{\lambda\in\Lambda}$ and $\{(\lambda,\mu_k)_{\calh^*}\}_{\lambda\in\Lambda}$ 
for $k=1,\ldots,N$. These make up the method's bulk storage of order
$(N+1)|\Lambda|$. Note that $\{(\lambda,\lambda)_{\calh^*}\}_{\lambda\in\Lambda}$
simplifies considerably for translation-invariant kernels.
\biglf
After maximizing \eref{eqGPFll} over all $\lambda\in\Lambda$,
we assume to have some $\lambda_{N+1}$ at which a nonzero maximum is attained.
This can then not be one of the old $\lambda_j$, and we retrieve the
values $(\lambda_{N+1},\mu_k)_{\calh^*}$ from what we have. We now orthonormalize 
$$
\mu_{N+1}=\displaystyle{\sum_{j=1}^{N}c_{N+1,j}\mu_j+c_{N+1,N+1}\lambda_{N+1}}
$$
via $c_{N+1,j}=-(\lambda_{N+1},\mu_k)_{\calh^*}c_{N+1,N+1},\;1\leq k\leq N$ and
$$
\begin{array}{rcl}
1=(\mu_{N+1},\mu_{N+1})_{\calh^*}&=&\displaystyle{  c^2_{N+1,N+1}
\left((\lambda_{N+1},\lambda_{N+1})_{\calh^*}- \sum_{k=1}^N(\lambda_{N+1},\mu_k)_{\calh^*}^2 \right)}\\
&=&c^2_{N+1,N+1}P^2_{\Lambda_N}(\lambda_{N+1})
\end{array} 
$$
to update the $C$ matrix. Finally
$$
\begin{array}{rcl}
  (\lambda,\mu_{N+1})_{\calh^*}
  &=&\displaystyle{\sum_{j=1}^{N}c_{N+1,j}(\lambda,\mu_j)_{\calh^*}+c_{N+1,N+1}(\lambda,\lambda_{N+1})_{\calh^*}}
\end{array}
$$
can be calculated from what we have, if we
first calculate all
$\{(\lambda,\lambda_{N+1})_{\calh^*}\}_{\lambda\in\Lambda}$ and overwrite them
with $\{(\lambda,\mu_{N+1})_{\calh^*}\}_{\lambda\in\Lambda}$ after use.
This extends the {\em Newton basis}
technique in \RScite{pazouki-schaback:2011-1}
to general functionals.
\subsection{Bases and Postprocessing}\RSlabel{SecGrePost}
Useful reduced bases for PDE solving are the
Riesz representers $v_{\mu_k}$ of the $\mu_k$,
being orthonormal in Hilbert space.
From Riesz representers
$
v_{\lambda_j}(\cdot)=\lambda_j^xK(x,\cdot)
$
of the $\lambda_j$, we can calculate them
recursively via
\bql{eqvcalc}
v_{\mu_k}(x)=\displaystyle{\sum_{j=1}^{k-1}c_{kj}v_{\mu_j}(x)+c_{k,k}\,
  v_{\lambda_k}(x),\;1\leq k\leq
  n  }
\eq
on whatever point sets we like, using our triangular matrix $C$. Via
\eref{eqGPFll} in the form
\bql{eqPKv}
P^2_{\Lambda_N}(\delta_x)=K(x,x)-\sum_{j=1}^Nv^2_{\mu_j}(x),
\eq
this allows to calculate the pointwise error bounds
\eref{eqpwerrbnd} described in Section
\RSref{SecEA} explicitly, up to the term $\|u\|_\calh$.
\biglf
In view of Proper Orthogonal Decomposition, one can also
apply a Singular Value Decomposition to the partial
Gramian matrix with entries $(\lambda_j,\lambda_k)_{\calh^*},\;1\leq j,k\leq N$
and construct a different $\calh$-orthonormal basis.
This possibility is not pursued here, because we want to keep
the recursive structure of the algorithm. In view
of \RScite{santin-schaback:2016-1}, this basis may be closer to
what happens for Kolmogoroff $N$-widths, because the spaces for the latter
will not necessarily have a recursive structure.
\biglf
Like in reduced basis techniques, the orthonormal
bases are a simple tool to
solve a variety of similar problems, namely all
problems of the form \eref{eqLufg}. If data $\lambda_j(u)$
are known for an unknown function $u$, we go over to $\mu_j(u)$
via the triangular system
\eref{eqmufromlambda} and then form the optimal projection
\bql{equNtilde}
\tilde u_N:=\displaystyle{\sum_{j=1}^N\mu_j(u)v_{\mu_j}   } 
\eq
using the $\calh$-orthonormal basis we have constructed. 
\biglf
Note that this algorithm
neither stores nor solves a large $|\Lambda|\times |\Lambda|$
system. It works ``on--the--fly''. For $N$ steps, \
storage is of order ${\cal O}(N\cdot|\Lambda|+N^2)$ and calculations
are of order ${\cal O}(N^2\cdot|\Lambda|)$. 
For a given accuracy requirement, the number $N$ of steps
will depend on the Kolmogoroff $N$-width for the set $\Lambda$
and the Hilbert space $\calh^*$.
\biglf
There are no square systems to be solved.
Instead, there are $N$ orthonormalization steps on vectors of length
$|\Lambda|$ that may require standard stabilization
precautions for the basic Gram-Schmidt technique. The resulting triangular
$N\times N$ matrix $C$ is not explicitly
inverted, but used via \eref{eqmufromlambda}
to transform input data
in terms of the $\lambda_j$ functionals into data in terms of the
orthonormalized functionals $\mu_j$. The
error behavior of this is comparable to backsubstitution
after an $LR$ or $QR$ factorization, but it will pay the price
when some of the $\lambda_j$ are strongly correlated.
This will be unavoidable for large $N$, but
\eref{eqPFdef} and \eref{eqGdef} show that $\lambda_{N+1}$ will be
kept away from the zeros $\lambda_1,\ldots,\lambda_N$ of $P_{\Lambda_N}$
and the space they span, 
by construction.

\subsection{Extended Greedy Method}\RSlabel{SecVGM}
One can get somewhat closer to the error analysis in Section
\RSref{SecEA} and in particular to \eref{eqpwmu} by a modification of the
selection strategy of functionals.
Given a set $\Lambda_N:=\{\lambda_1,\ldots,\lambda_N\}$,
and a set $M=\{\mu_1,\ldots,\mu_M\}$ with nonempty intersection,
we calculate two maxima
$$
\begin{array}{rclcl}
\tilde \lambda&:=&\arg \sup _{\lambda\in \Lambda}P_{\Lambda_N}(\lambda)&=&\sigma_{\Lambda_N}\\
\tilde \mu&:=&\arg \sup _{\mu \in M }P_{\Lambda_N}(\mu)&=:&\rho_{\Lambda_N}
\end{array}
$$
and set
$$
\lambda_{N+1}:=\left\{\begin{array}{rcl}
\tilde \mu & & \hbox{ if } \mu \in \Lambda\cap M\\ 
\tilde \lambda & & \hbox{ else. } 
\end{array}     \right.
$$
This can be called an {\em extended $P$-greedy algorithm}. It tries to
keep some additional control of $\rho_{M,\Lambda_N}$ from \eref{eqrhoMdef}
by selecting functionals from $\Lambda\cap M$ whenever $\rho_{M, \Lambda_N}$ is
attained on them.
\biglf
For solving Dirichlet problems, the set $M$ will consist
of delta functionals in $\overline \Omega$, the intersection
of $M$ and $\Lambda$ being $\Lambda_2$, the set of functionals
for Dirichlet boundary values. The new technique  will make sure that if $P_{\Lambda_N}(M)$
attains its maximum on the boundary, the corresponding functional
is preferred over the functional where $P_{\lambda_N}(\Lambda)$
attains its maximum. 
\biglf
Now for some inplementation details for the Dirichlet case.
Define the set $Z$ of boundary points via
$$
\{\delta_z\;:\;z\in Z\}=\Lambda_2
$$
and add some other point set $Y\subset\overline\Omega$ to get
$$
M:=\{\delta_y\;:\;y\in Y\}\cup \Lambda_2=\{\delta_x\;:\;x\in Y\cup Z\}.
$$
We need the additional value $\|P_{\Lambda_N}\|_{\infty,Y}$, while the
usual Greedy method provides $\|P_{\Lambda_N}\|_{\infty,Z}$ as part of the
calculation of $\rho_{\Lambda_N}$.
If we use \eref{eqPKv} on $Y$, we have the necessary data, but for that we
have to evaluate \eref{eqvcalc} on $Y$ as well, and have to store
the values of the orthonormal basis on $Y$. This requires an additional
storage of size $|Y|\cdot N$ for $N$ steps, and additional calculations
of order $|Y|\cdot N^2$. On can choose  $|Y|$ smaller than $|\Lambda_1|$
to keep the complexity at bay.

\section{Sobolev case}\RSlabel{SecSobCas}
We now go back to the example at the beginning.
We need information
on the Kolmogoroff $N$-width
$$
  d_N(\Lambda)
=\displaystyle{  \inf_{\hbox{all } H_N}\sup_{\mu\in
  \Lambda}\dist(\mu,H_N)_{{W_2^m(\Omega)}^*}}
$$
  where the infimum is taken over all $N$-dimensional subspaces
  $H_N$ of $\calh^*={W_2^m(\Omega)}^*$,
  and the set $\Lambda=\Lambda_1\cup\Lambda_2\subset \calh^*$
  is formed by \eref{eqLamdef}.
  There should be an $m$- and $d$-dependent
  decay rate $\kappa(m,d)$ in the sense
  $$
d_N(\Lambda)\leq CN^{-\kappa(m,d)} \hbox{ for } N\to\infty 
  $$
but no explicit results on this were found yet.
\biglf
If we restrict attention to spaces $H_N$ being generated by $N$
functionals from $\Lambda=\Lambda_1\cup\Lambda_2\subset \calh^*$,
we only get an upper bound for $d_N(\Lambda)$, and we
do not know the splitting $N=N_1+N_2$ if an optimal choice of
$N$ functionals from $\Lambda$  
takes $N_1$ functionals out of $\Lambda_1$ and $N_2$
functionals out of $\Lambda_2$. 
The rest of the chapter will give some arguments
supporting the hypothesis
\bql{eqkappahypo}
\kappa(m,d)\geq \dfrac{m-2-d/2}{d}
\eq
that will be observed in the numerical behavior of the $P$-greedy method
in Section \RSref{SecNumRes}.
\biglf
If $\Lambda$ is the union of two disjoint compact sets
$\Lambda_1$ and $\Lambda_2$, then
$$
\begin{array}{rcl}
  d_N(\Lambda)
&=&\displaystyle{  \inf_{\hbox{all } H_N}\sup_{\mu\in
  \Lambda}\dist(\mu,H_N)_{\calh^*}}\\
  &=&\displaystyle{  \inf_{\hbox{all } H_N}\max\left(
    \sup_{\mu\in
      \Lambda_1}\dist(\mu,H_N)_{\calh^*},\sup_{\mu\in
      \Lambda_2}\dist(\mu,H_N)_{\calh^*}\right)
  }\\
 &\leq& \displaystyle{\inf_{H_{N_1},H_{N_2},N_1+N_2\leq N,\,H_N=H_{N_1}+H_{N_2}}
\max(d_{N_1}(\Lambda_1),d_{N_2}(\Lambda_2))}
\end{array} 
$$
where $H_{N_i}$ is used for approximation of $\Lambda_i$,
and the sum of spaces is direct.
This {\em splitting argument} is similar to the technique
in \RScite{franke-schaback:1998-2a}. However, we do not know a priori
how the greedy algorithm selects functionals from either set,
and how the dimension splits into $N_1+N_2\leq N$.
\biglf
We first aim at $d_N(\Lambda_1)$ in the space ${W_2^m(\Omega)}^*$.
We have
$$
\begin{array}{rcl}
  d_N(\Lambda_1)
  &=& \displaystyle{ \inf_{\hbox{ all } H_N}\sup_{x\in\Omega}\inf_{\lambda\in
    H_N}\|\lambda-\delta_x\circ L\|_{{W_2^m(\Omega)}^*}}\\ 
\end{array}
$$
and can majorise it by choosing $N$ asymptotically uniformly placed
points $x_1,\ldots x_N$
in $\overline\Omega$ at fill distance $h_\Omega$ and taking
$H_N$ to be the span of
the corresponding functionals $\delta_{x_j}\circ L$. Then
$$
\begin{array}{rcl}
  d_N(\Lambda_1)
  &\leq& \displaystyle{\sup_{x\in\Omega}\inf_{\alpha\in\R^N}
   \|\delta_x\circ L -\sum_{j=1}^N\alpha_j\delta_{x_j}\circ L\|_{{W_2^m(\Omega)}^*}}\\ 
\end{array}
$$
holds and we can expect
$$
\begin{array}{rcl}
  d_N(\Lambda_1)
  &\leq& C\,\displaystyle{\sup_{x\in\Omega}\inf_{\alpha\in\R^N}
   \|\delta_x-\sum_{j=1}^N\alpha_j\delta_{x_j}\|_{{W_2^{m-2}(\Omega)}^*}}\\
  &\leq& C\,h_\Omega^{m-2-d/2}\leq C\,N^{-(m-2-d/2)/d}
\end{array}
$$
due to standard results on error bounds for interpolation
\RScite{wendland:2005-1}, and with generic constants depending on $m$, $d$
and the domain. 
\biglf
On the boundary $\Gamma$, we can argue similarly to expect
$$
d_N(\Lambda_2)\leq C\,h_\Gamma^{m-d/2}\leq C\,N^{-\frac{m-d/2}{d-1}}
$$
for a fill distance $h_\Gamma$ on the boundary,
either by working in ${W_2^m(\Omega)}^*$ directly,
or via trace theorems, which would
give the same rate due to $m-1/2-(d-1)/2=m-d/2$.
\biglf
If we now consider splittings $N=N_1+N_2$, we roughly have
the upper bound
$$
d_N(\Lambda)\leq
C\max\left(N_1^{-(m-2-d/2)/d},N_2^{-\frac{m-d/2}{d-1}} \right)
$$
and the crude split $N_1\approx N/2\approx N_2$
will already support \eref{eqkappahypo}.
\biglf
For purposes of asymptotics, we can minimize
the sum of the above quantities instead of the maximum.
Using standard optimization arguments under the constraint
$N_1+N_2\leq N$, the result after some calculations is that one should expect
$$
c_1\frac{m-2-d/2}{d}N_1^{-\frac{m-2+d/2}{d}}
=c_2\frac{m-d/2}{d-1}N_2^{-\frac{m-1+d/2}{d-1}}
$$
with constants depending on the domain and the space dimension,
but not on $N$ and $m$. 
For the typical case $m=4,\;d=2$ this implies
$N_2\approx N_1^{3/8}$. For large $m$ and
$d$ we get $N_1\approx N_2$, i.e. the domain and the boundary require roughly
the same degrees of freedom. This is no miracle, because the volumes
of balls
of high dimension are increasingly accounted for by the boundary layer.
In general, the above argument implies $N_2\leq c\,N_1$ for
$N$ large enough, and then \eref{eqkappahypo} will hold.
Section \RSref{SecNumRes} will reveal that the greedy
algorithm selects unexpectedly small values of $N_2$, and that the
hypothesis \eref{eqkappahypo} is supported.
\section{Numerical Results}\RSlabel{SecNumRes}
No matter which PDE examples are selected, the most interesting
question is the behavior of the greedy method as a function
of the $N$ steps it takes. If $\Lambda$ is chosen large enough
but still finite, the crucial quantities are
$\sigma_{\Lambda_N}$ and $\rho_{\Lambda_N}$
as defined in \eref{eqsigmadef} and  \eref{eqrhodef}.
By Section \RSref{SecGM} we can expect that $\sigma_{\Lambda_N}$
decays at an optimal rate comparable to the Kolmogoroff $N$-width
with respect to $\Lambda$, but since the latter is still unknown, we
can not yet assess how close we come to it. Since $\rho_{\Lambda_N}$
is controlling the error in the sup norm, without being usable
for greedy refinement, and in view of Lemma
\RSref{Lemrhosig}, we would like to confirm that
$\rho_{\Lambda_N}$ decays as fast  as $\sigma_{\Lambda_N}$.
And, the decay rates should improve with smoothness of the functions
in the basic Hilbert space, i.e. with $m$ if we work
in $W_2^m(\R^d)$, our hypothesis being
\eref{eqkappahypo}. Another interesting question is how the greedy
method chooses between boundary functionals
from $\Lambda_2$ and domain functionals from $\Lambda_1$,
and whether the corresponding points are roughly uniformly
distributed in both cases. Finally, the shape and the behaviour
of the basis functions $v_{\mu_j}$ should be demonstrated.
\subsection{Observations for varying $N$}\RSlabel{SecNumResObsCVN}
We start on the 2D unit disk, with $L$ being the Laplace operator,
carrying the greedy method out for up to 500 steps, offering 17570
functionals for $\Lambda_1$ and 150 functionals for $\Lambda_2$.
The Hilbert space will be $W_2^m(\R^2)$, but we fix $m>2+d/2$ first,
to study the behaviour of the greedy method for varying the number $N$
of steps.
Classical results on kernel-based interpolation lets us
expect rates
for $\sigma_{\Lambda_N}$ and $\rho_{\Lambda_N}$ that are
determined by fill distances. If $h_\Gamma$ and $h_\Omega$ are fill
distances
for points on the boundary $\Gamma$ and in the domain $\Omega$,
we can compare $\sigma_{\Lambda_N}$
with plain interpolation of $\Delta u=f$
with smoothness $m-2$ and behavior
\bql{eqhDexp}
h_\Omega^{m-2-d/2}=h_\Omega^{m-3}\approx N_\Omega^{-\frac{m-2-d/2}{d}}=N_\Omega^{-\frac{m-3}{2}}
\eq
on the domain. 
The error on the boundary in $L_\infty$
should, if it were a
plain interpolation,
behave like  
\bql{eqhBexp}
h_\Gamma^{m-1/2-(d-1)/2}=h_\Gamma^{m-d/2}=h_\Gamma^{m-1}\approx N_\Gamma^{-\frac{m-d/2}{d-1}}=N_\Gamma^{-(m-1)},
\eq
if $N_\Gamma$ points are asymptotically equally spaced on the boundary.
But we do not know a priori how the greedy algorithm chooses
between boundary and domain functionals.
\biglf
The first experiments are for $m=4>2+d/2=3$, and we ignore the split
$N=N_\Omega+N_\Gamma$ in the beginning. The scale of the
Whittle-Mat\'ern kernel is chosen to be 1, and then both types of functionals
happen to have the same norm. Scaling changes the relation between
function value evaluation and Laplace operator evaluation and must be used
with care. We wanted to eliminate scaling effects for what follows,
in order to let the choice between boundary and domain functionals be
unbiased by possibly different norms.
\biglf
Figure \RSref{Fig020301500p3rates} shows 
$\sigma_{\Lambda_N}$ (left) and $\rho_{\Lambda_N}$ (right)
as functions of $N$, with the observed rates $-0.45$ and $-0.54$, respectively.
We see that
$\rho_{\Lambda_N}$ decays as fast  as $\sigma_{\Lambda_N}$ as
functions of $N$.
\begin{figure}[hbt]
\begin{center}
\includegraphics[height=4.0cm,width=6.0cm]{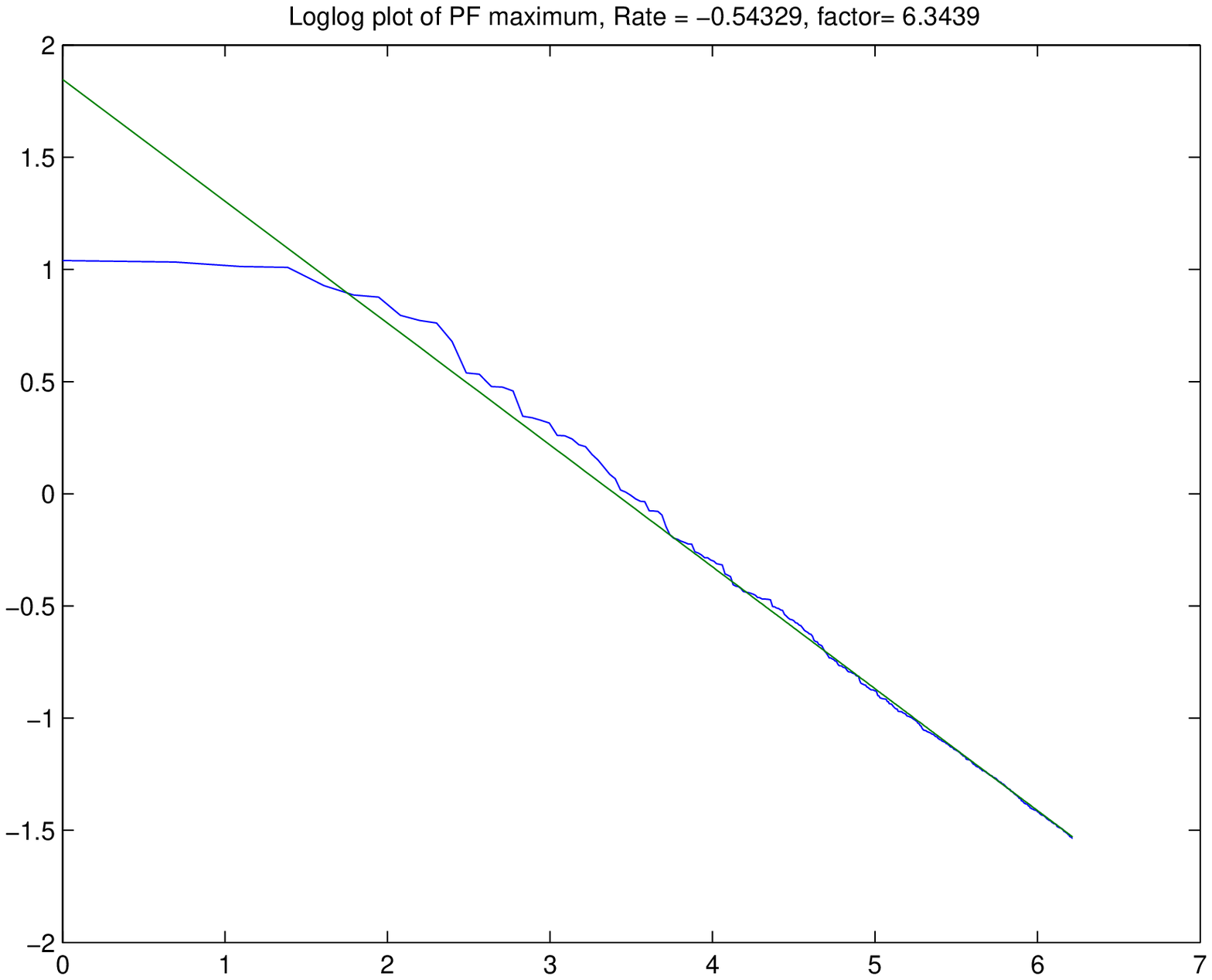} 
\includegraphics[height=4.0cm,width=6.0cm]{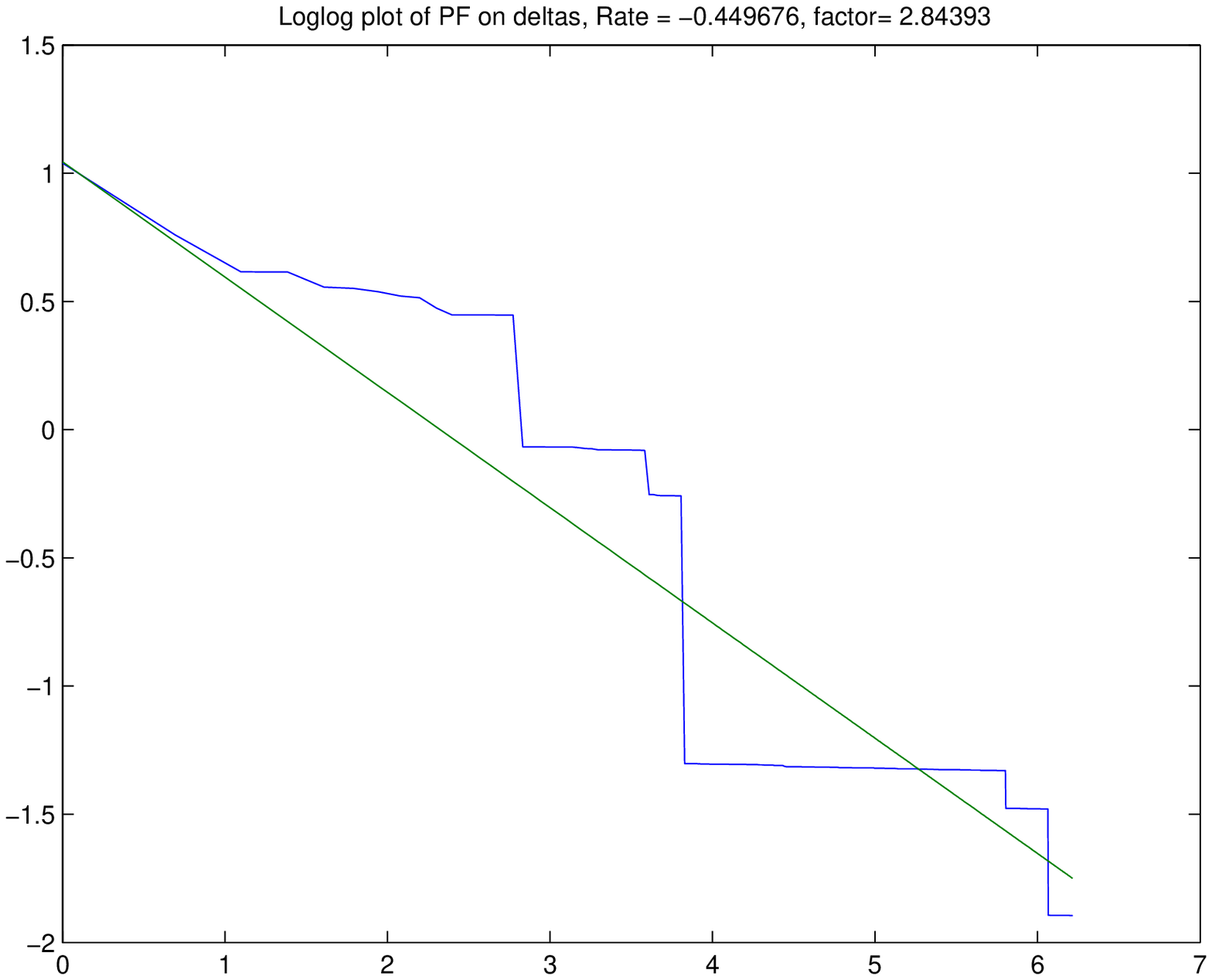} 
\end{center}
\caption{$\sigma_{\Lambda_N}$ and $\rho_{\Lambda_N}$
  and their rates as functions of $N$\RSlabel{Fig020301500p3rates}}
\end{figure} 
\biglf
The strange drops of $\rho_{\Lambda_N}$ at certain $N$ occur exactly when the fill
density $h_\Gamma$ on the boundary drops, namely
after the greedy method has chosen another boundary
functional.
This can be read off the two plots of Figure \RSref{Fig020301500p3both} that show both 
$\sigma_{\Lambda_N}$ and $\rho_{\Lambda_N}$ (left) and both
$h_\Gamma(N)$ and $h_\Omega(N)$ as functions of the $N$
functionals that were greedily selected.
Figure \RSref{Fig020301500p3PF} shows the strange fact that
the greedy method selects only rather few boundary functionals
compared to domain functionals (6 versus 494). It allows two large peaks
on the boundary, because it still fights for getting small on the domain
functionals.
These effects were observed in many other cases that we suppress here for
brevity.
Choosing different weights for domain and boundary functionals
makes some sense in view of well-posedness inequalities
like \eref{eqWP}, and then the effect will be less apparent.
\begin{figure}[hbt]
\begin{center}
\includegraphics[height=4.0cm,width=6.0cm]{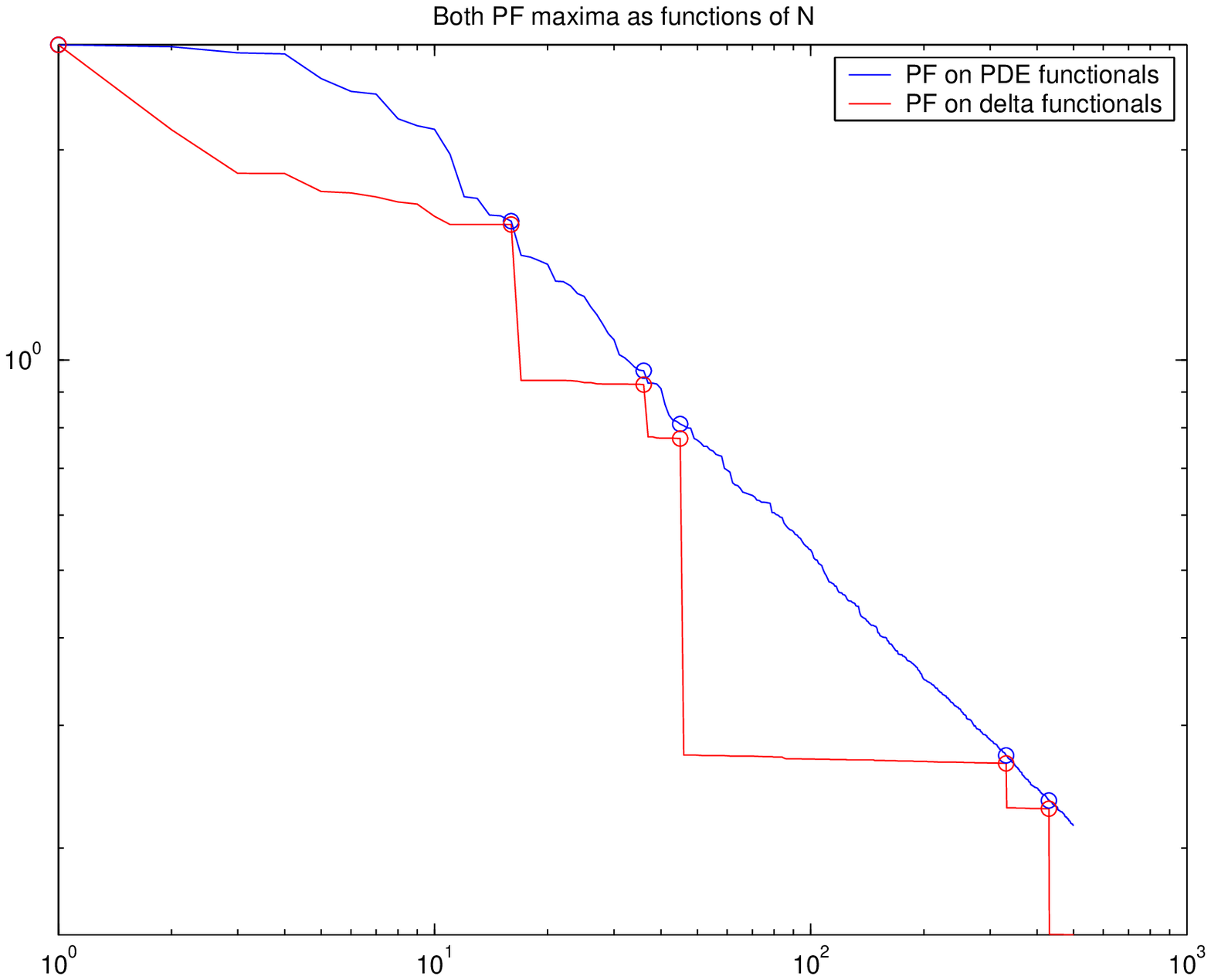}\\ 
\includegraphics[height=4.0cm,width=6.0cm]{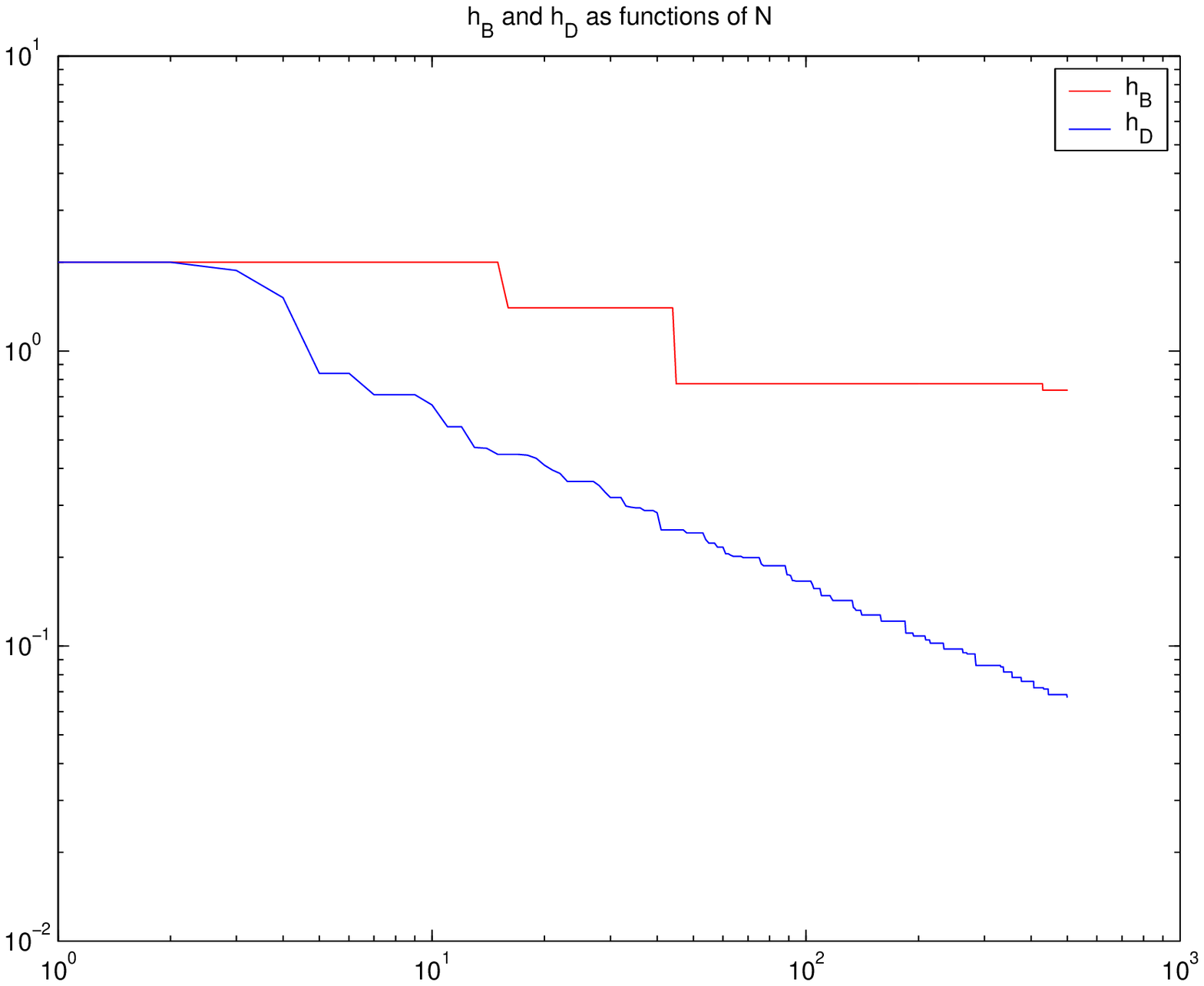} 
\end{center}
\caption{$\sigma_{\Lambda_N}$ and $\rho_{\Lambda_N}$ (top)
  compared with $h_\Gamma(N)$ and $h_\Omega(N)$ (bottom) 
  as functions of $N$. The drops of $\rho_{\Lambda_N}$
  and $h_\Gamma(N)$ occur at the same $N$.\RSlabel{Fig020301500p3both}}
\end{figure}
\biglf
Figure \RSref{Fig020301500p3PFN12} shows decays like 
$$
P_{\Lambda_N}\approx N_\Omega^{-0.53} \hbox{ and }  P_{\Lambda_N}\approx N_\Gamma^{-1.42}
$$
in comparison to the expectations in \eref{eqhBexp} and \eref{eqhDexp}
that suggest rates $-0.5$ and $-3$, respectively, for $m=4$.
This shows again that
the greedy method focuses on domain points, and is able to maintain a small
error on the boundary by adding ``domain'' functionals there and close to the
boundary.  
\begin{figure}[hbt]
\begin{center}
  \includegraphics[height=6.0cm,width=6.0cm]{%
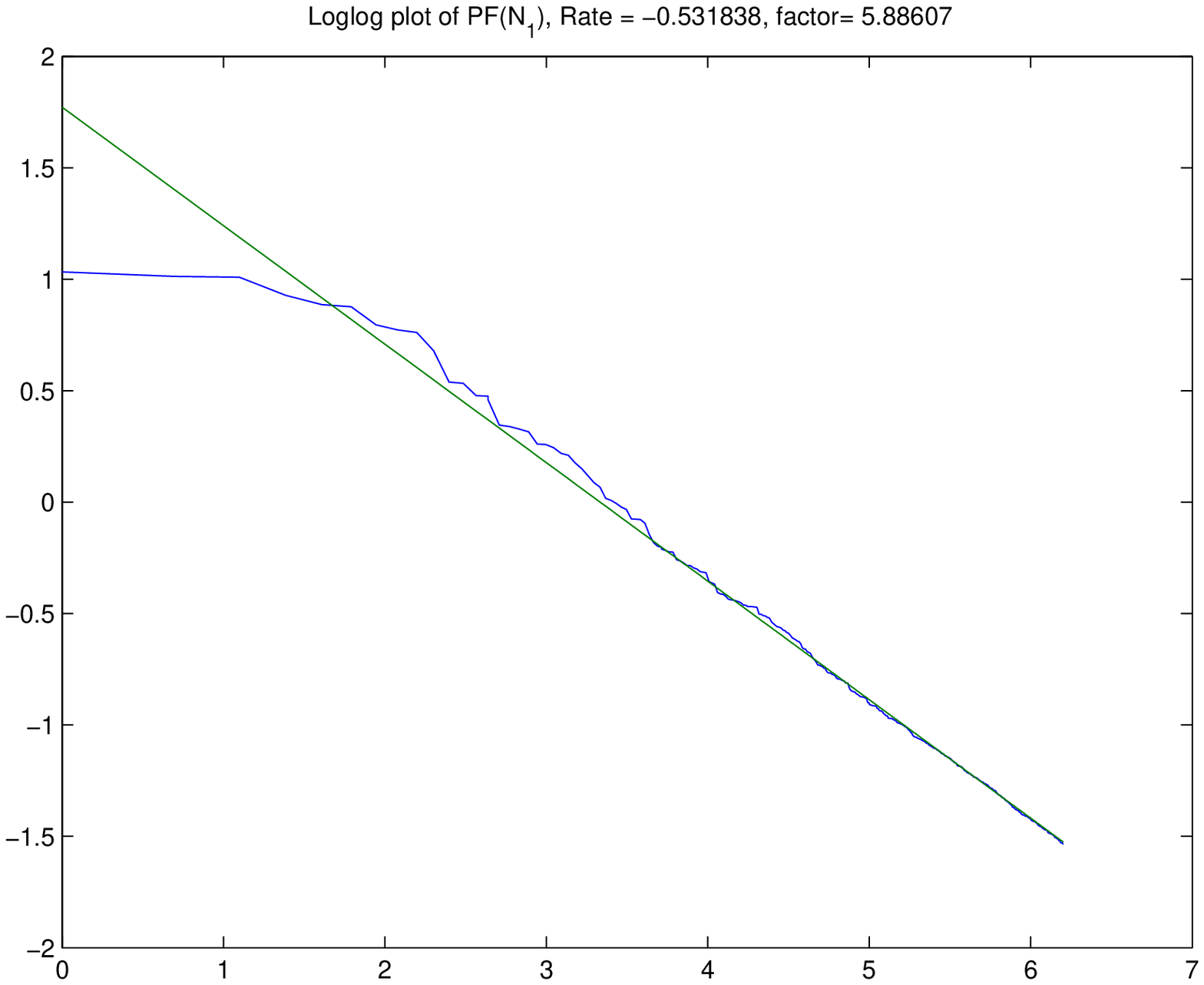} 
  \includegraphics[height=6.0cm,width=6.0cm]{%
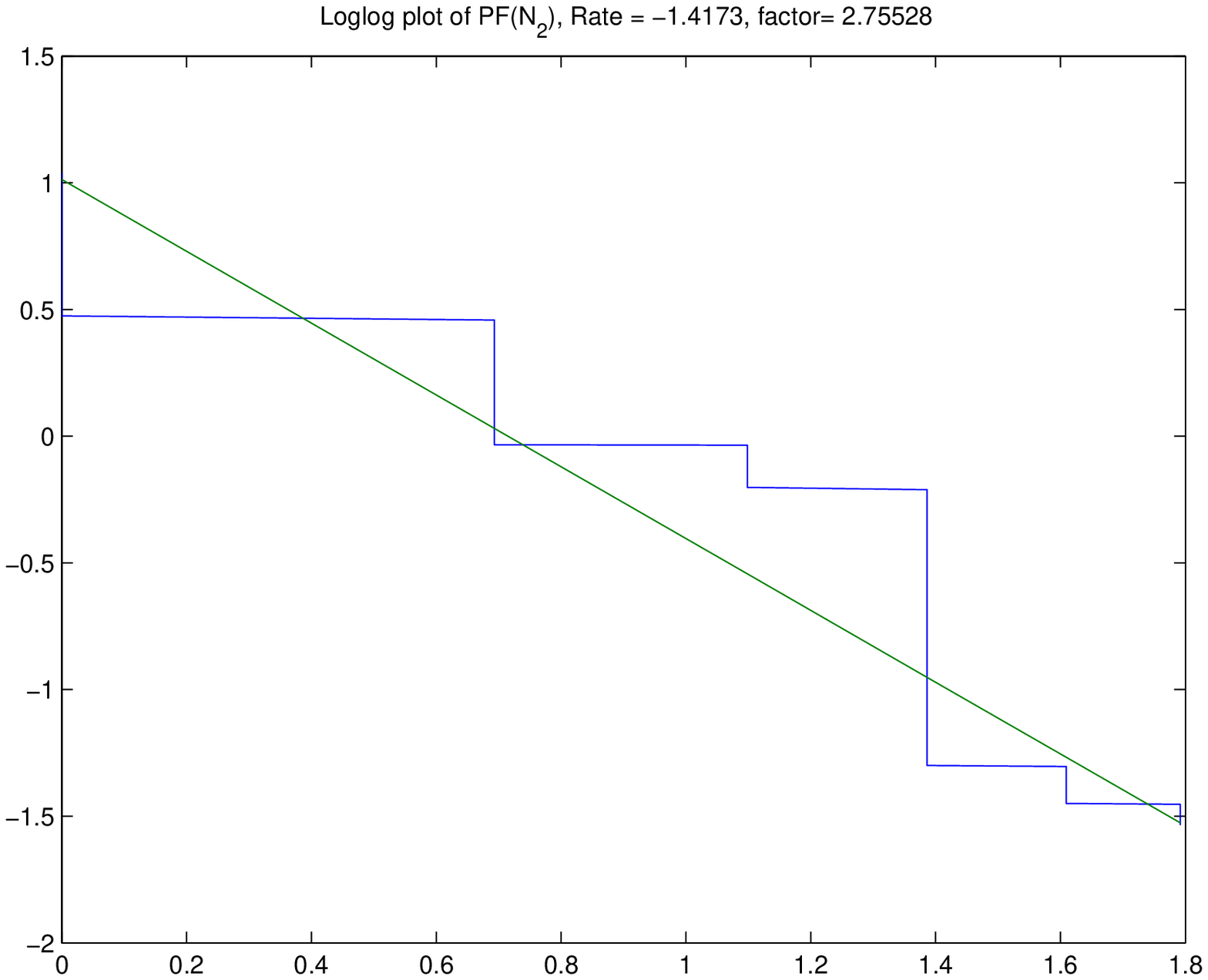} 
\end{center}
\caption{$P_{\Lambda_N}$ with $N=N_\Omega+N_\Gamma$
  as a function of $N_\Omega$ and $N_\Gamma$, respectively.
\RSlabel{Fig020301500p3PFN12}}
\end{figure} 

\begin{figure}[hbt]
\begin{center}
\includegraphics[height=4.0cm,width=12.0cm]{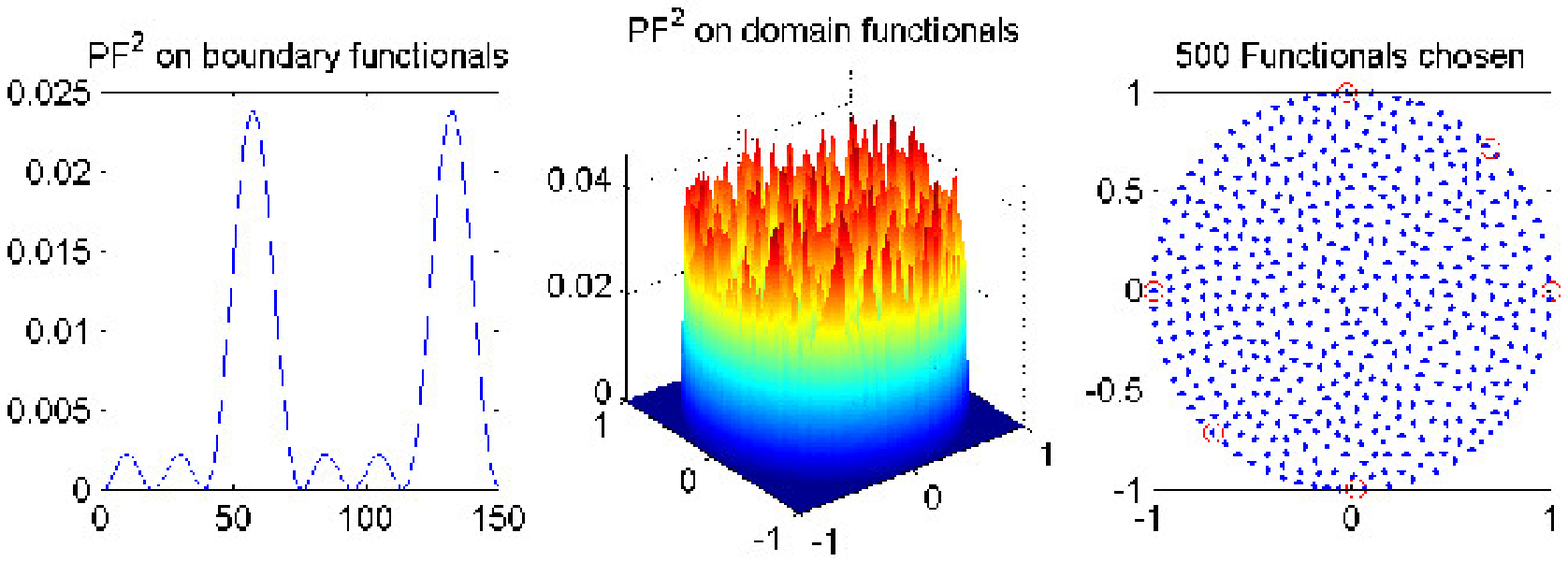}\\ 
\end{center}
\caption{$P^2_{\Lambda_{500}}$ on the boundary functionals, the domain
  functionals, and the selected 500 functionals
\RSlabel{Fig020301500p3PF}}
\end{figure} 
\biglf
If one looks at the Power Function on $\delta$ functionals all over the domain,
like for the $\rho_{\Lambda_N}$ calculation, the results are  in
Figure 
\RSref{Fig020301500p3NB}. It shows that for a better overall $L_\infty$ bound
it would be useful to pick boundary functionals at the boundary peaks of the
left plot. This calls for a variation of the greedy method that monitors 
$P_{\Lambda_N}$ on all $\delta$ functionals as well, and picks a boundary
functional as soon as the maximum is on the boundary. This will work for
Dirichlet problems, but not in general circumstances in Hilbert spaces.
See Sections \RSref{SecVGM}  for theory
and \RSref{SecNumResExtGM} for numerical results, respectively.
\begin{figure}[hbt]
\begin{center}
\includegraphics[height=4.0cm,width=12.0cm]{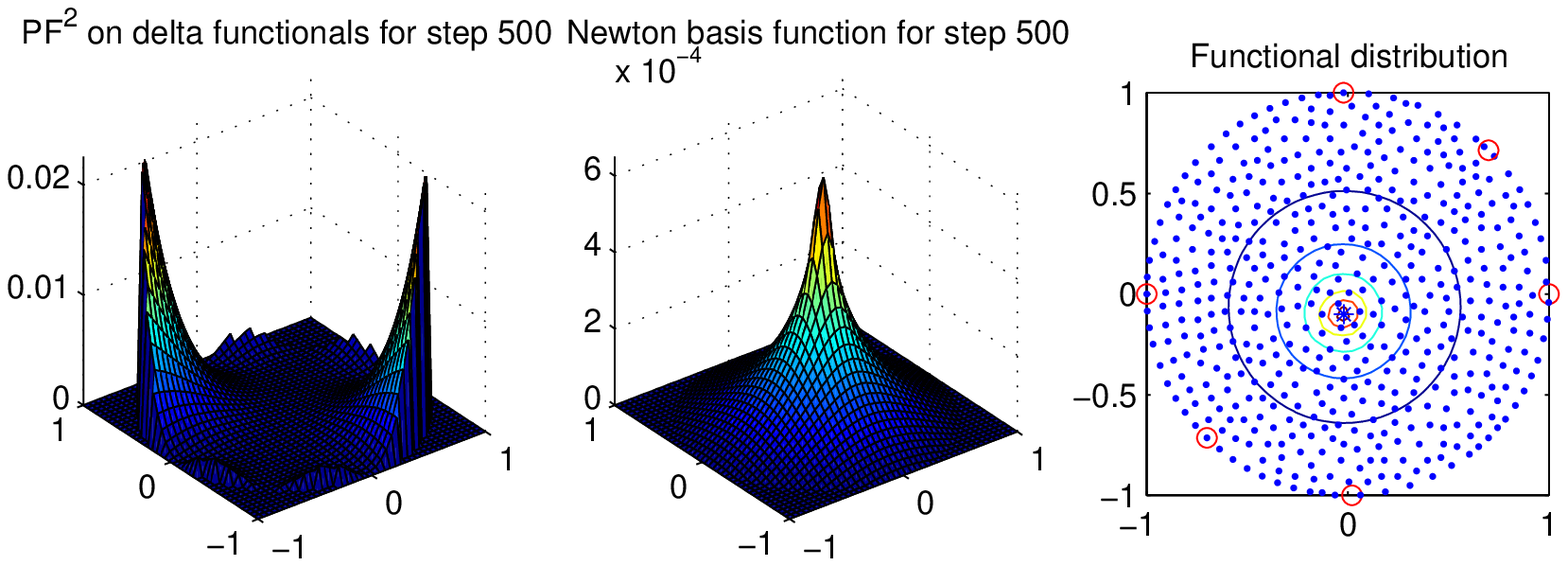}\\ 
\end{center}
\caption{$P^2_{\Lambda_{500}}(\delta_x)$ on the domain, the basis function
  $v_{\mu_{500}}$, and the selected 500 functionals, with the final selected
  domain functional marked with a blue asterisk in the center, the contours being those of
  $v_{\mu_{500}}$.
\RSlabel{Fig020301500p3NB}}
\end{figure} 
\biglf
The basis functions $v_{\mu_N}$ are orthonormal in Hilbert space but not in
$L_2$, decay with $N$, and show a sharp bell-shape for large $N$. Figure
\RSref{Fig020301500p3NB150} shows a case like Figure \RSref{Fig020301500p3NB},
but for $N=150$.
The new point, marked with an asterisk, is preferred over any boundary point by
the greedy method, though it is close to the boundary. The greedy method, as is,
does not need many $\delta$ functionals on the boundary, if it has
plenty of $\delta\circ L$ functionals on or near the boundary. 
\begin{figure}[hbt]
\begin{center}
\includegraphics[height=4.0cm,width=12.0cm]{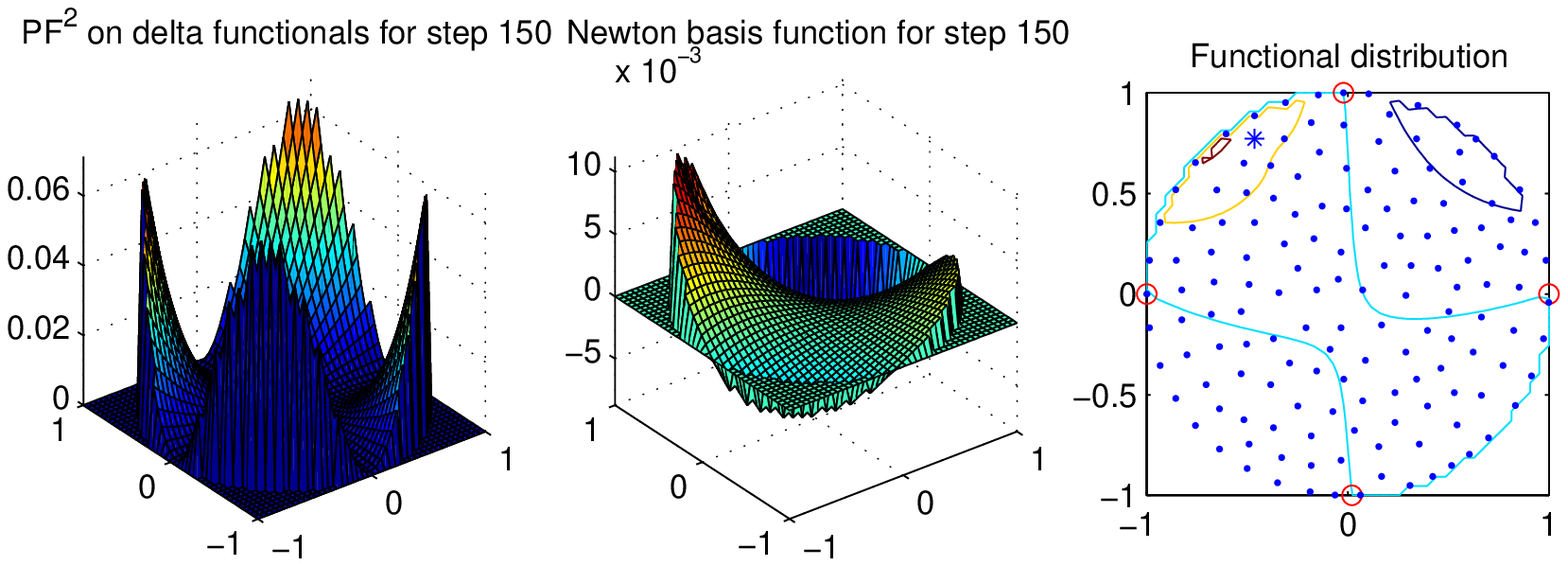}\\ 
\end{center}
\caption{$P^2_{\Lambda_{500}}(\delta_x)$ on the domain, the basis function
  $v_{\mu_{150}}$, and the selected 150 functionals, with the final selected
  domain functional marked with a blue asterisk in the northwest,
  the contours being those of
  $v_{\mu_{150}}$.
\RSlabel{Fig020301500p3NB150}}
\end{figure}

\biglf
The norm and the condition estimate of the transformation matrix
that takes the $\lambda_j(u)$ values into the $\mu_j(u)$ values,
based on \eref{eqmufromlambda},
are given in Figure \RSref{Fig020301500p3cond}, behaving roughly like $N^{0.69}$
and $N^{1.7}$, respectively. This is like
$h_\Omega^{-1/3}$ and $h_\Omega^{-2/3}$, respectively, in terms of the fill distance
$h_\Omega$ in
the domain.
\begin{figure}[hbt]
\begin{center}
\includegraphics[height=6.0cm,width=6.0cm]{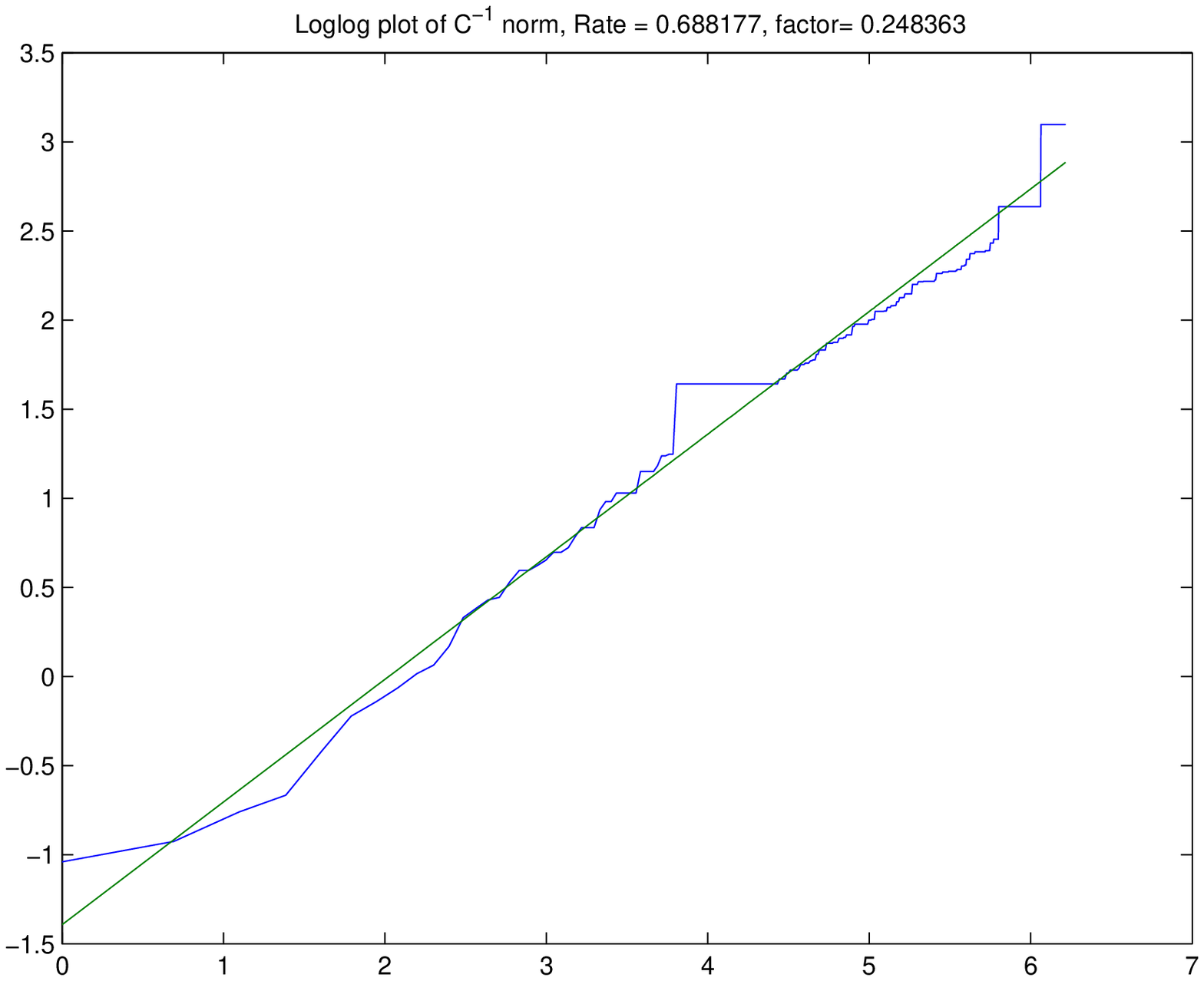}
\includegraphics[height=6.0cm,width=6.0cm]{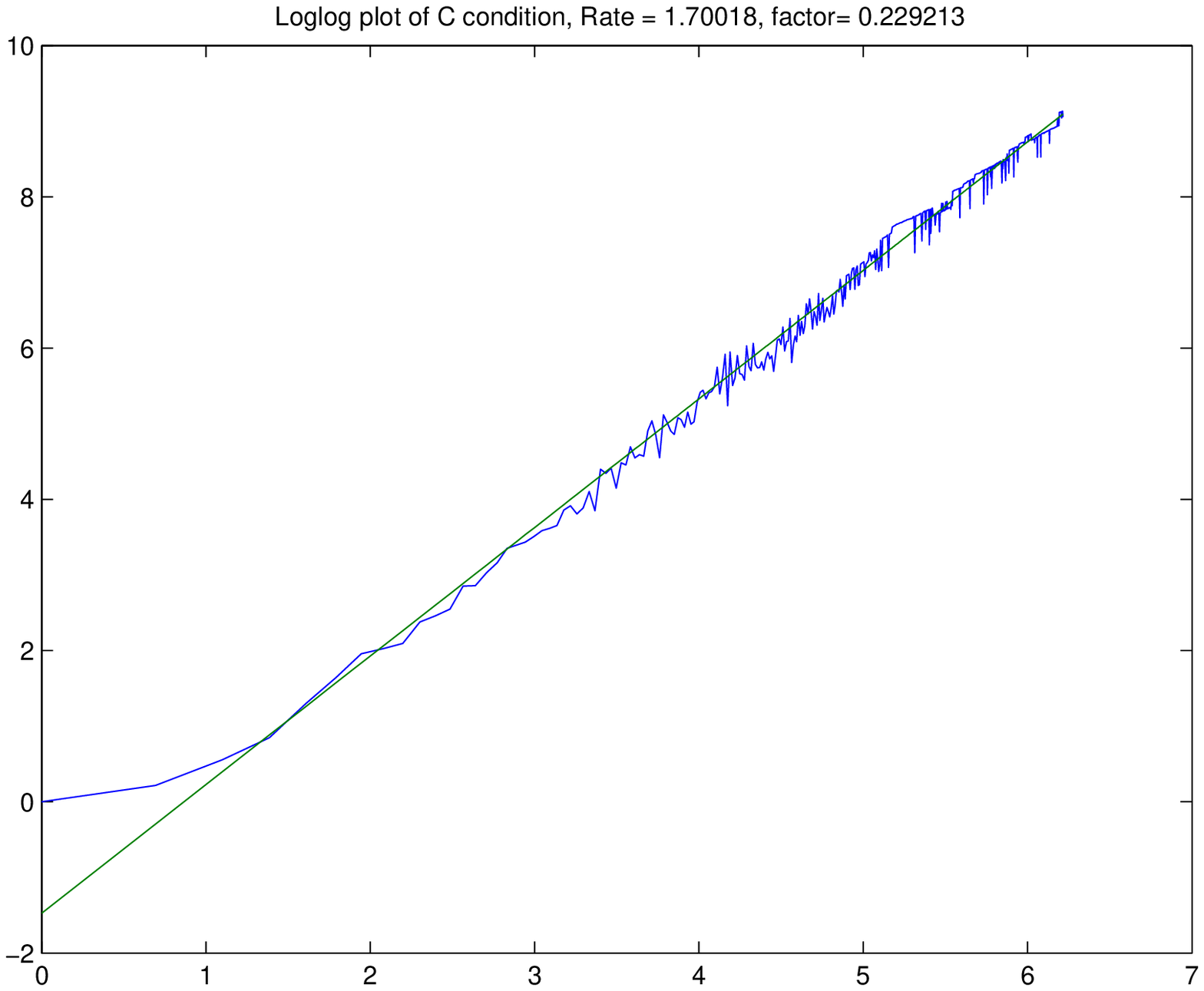} 
\end{center}
\caption{Norm and condition estimate of matrix $C(N)$ as functions of $N$.
\RSlabel{Fig020301500p3cond}}
\end{figure} 
\biglf
To get an idea of the decay of the basis functions $v_{\mu_j}$
in $L_2$ context,
the matrix of the values in a fine point set on the domain is calculated
and a singular value decomposition is done on that matrix.
The result is shown in Figure \RSref{Fig020301500p3sing}, the decay behavior 
of singular values being roughly like $N^{-2.4}$.
\biglf
Note that each basis function
is a worst case for the preceding steps, because it has zero data for them
and is approximated by the zero function. Thus $L_\infty$ or $L_2$ norms of the
basis functions are closely related to the worst-case
$L_\infty$ or $L_2$ norms of solutions with Hilbert space norm one.
Figure \RSref{Fig020301500p3RMSQSup} shows the RMSQ and
$L_\infty$ norms of the basis functions $v_{\mu_j}$ as functions of $j$,
the estimated decay being like $N^{-1.65}$ and $N^{-1.44}$, respectively,
but with serious roundoff pollution for large $j$. The peaks are exactly where
boundary points are chosen by the greedy method, and this is explained
via \eref{eqPKv} by the identity
$$
v_{\mu_j}^2(x)=P^2_{\Lambda_{j-1}}(\delta_x)-P^2_{\Lambda_{j}}(\delta_x),
\;P^2_\emptyset(\delta_x)=K(x,x)
$$
that calls for a large $v_{\mu_j}$ when there is a sharp drop
from $ \rho_{\Lambda_{j-1}}$ to $\rho_{\Lambda_{j}}$.
\begin{figure}[hbt]
\begin{center}
\includegraphics[height=8.0cm,width=8.0cm]{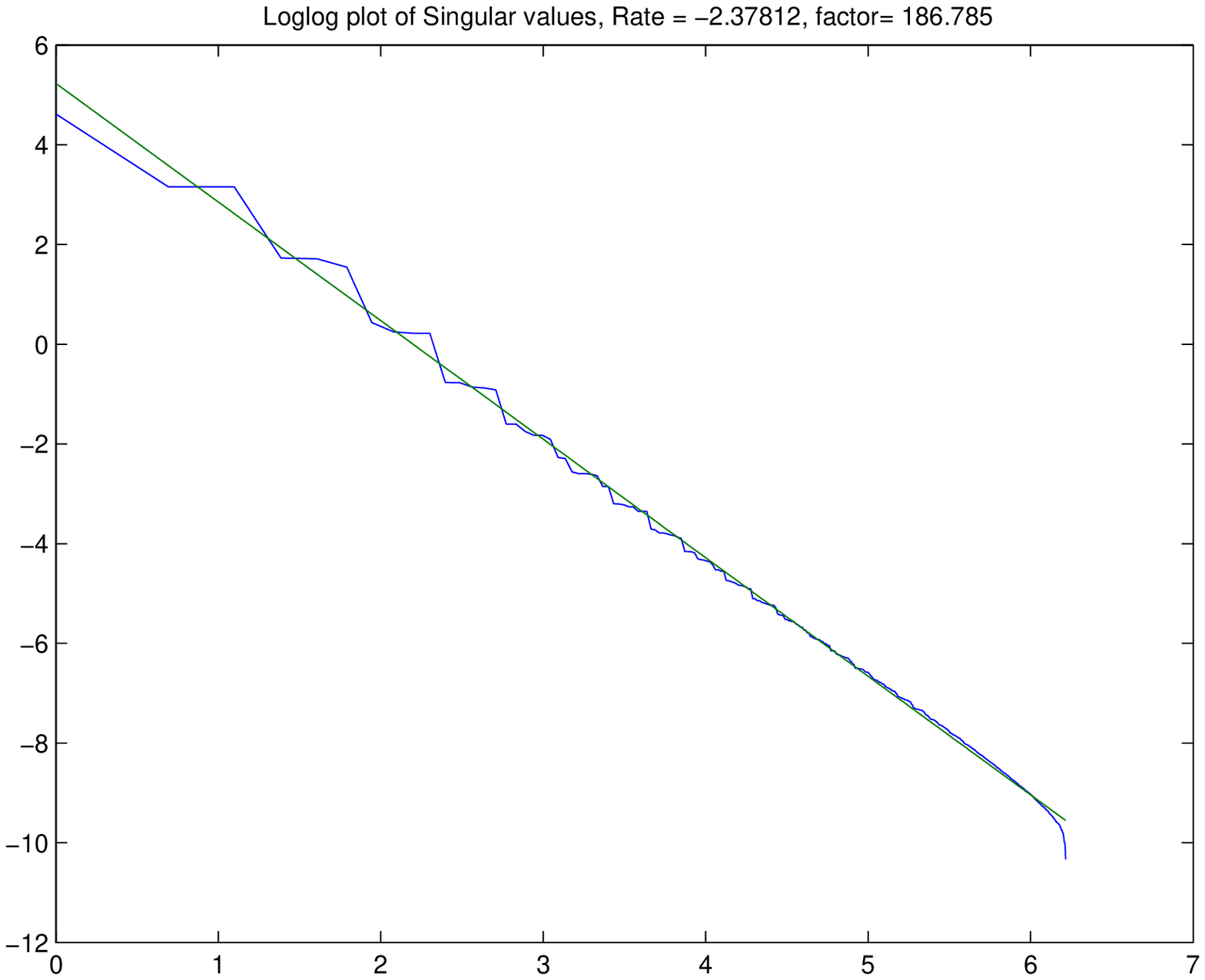}\\ 
\end{center}
\caption{Singular value decay of the matrix of suggested basis functions.
\RSlabel{Fig020301500p3sing}}
\end{figure} 
\begin{figure}[hbt]
\begin{center}
\includegraphics[height=6.0cm,width=6.0cm]{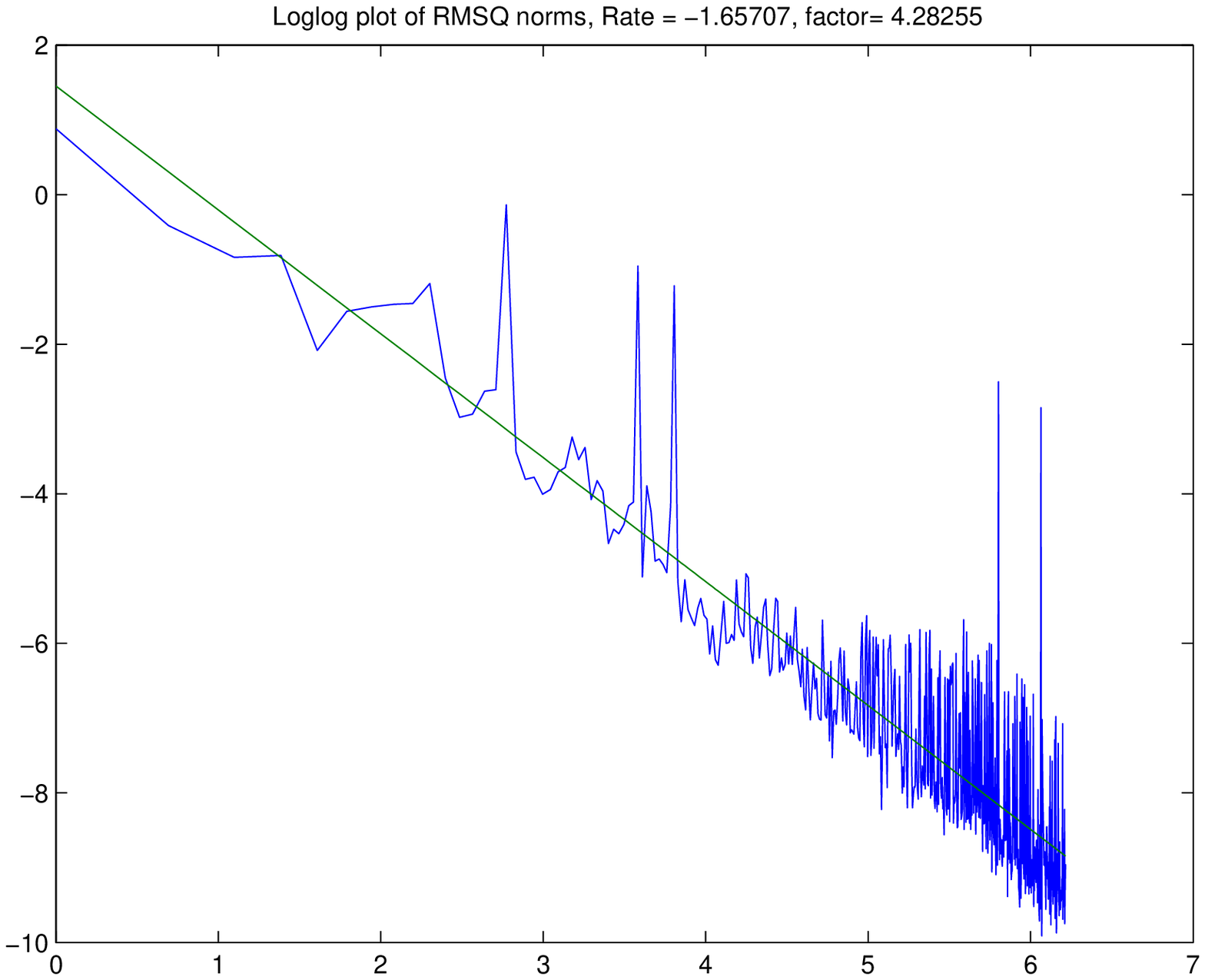}
\includegraphics[height=6.0cm,width=6.0cm]{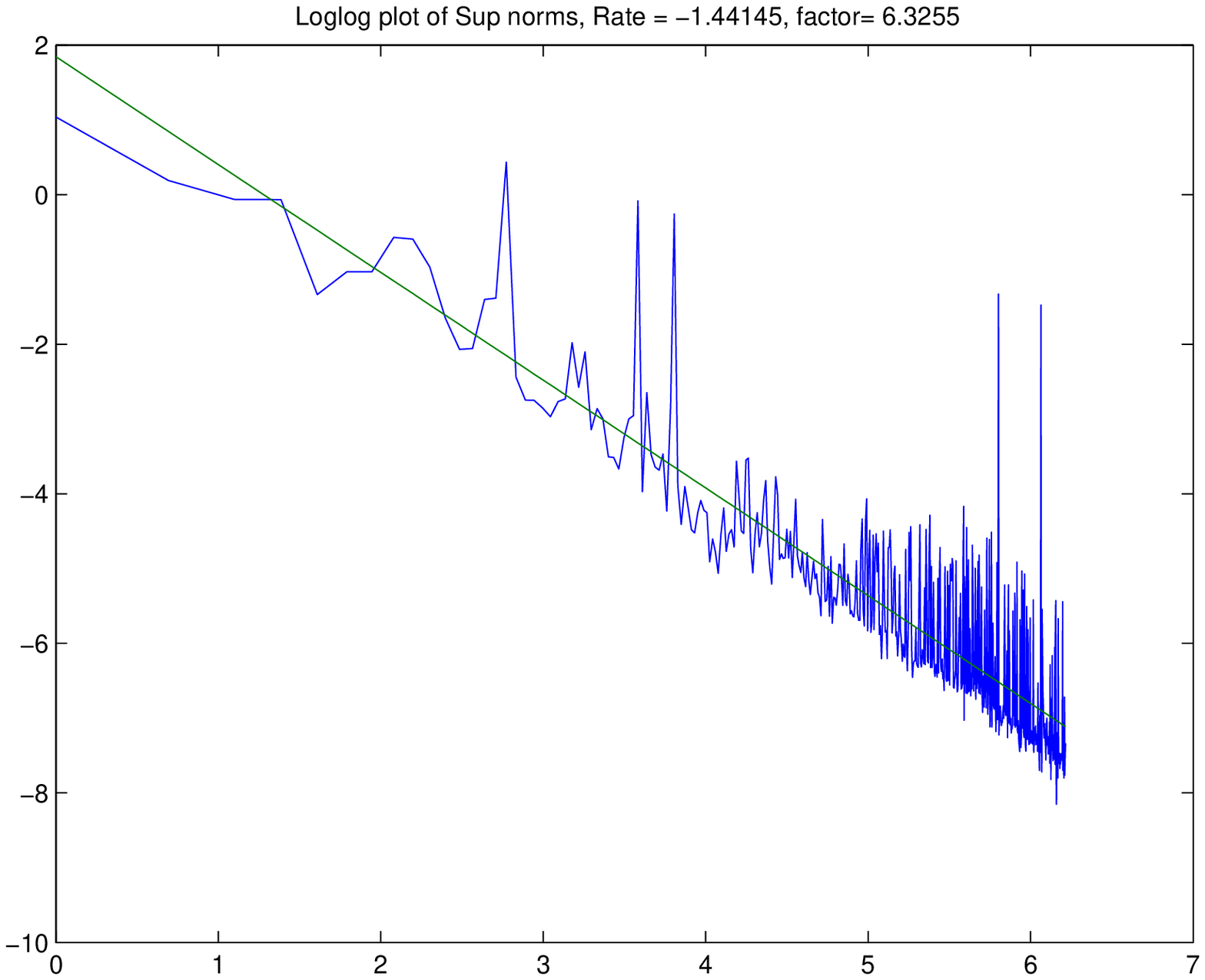} 
\end{center}
\caption{RMSQ and Sup 
\RSlabel{Fig020301500p3RMSQSup}}
\end{figure} 
\subsection{Observations for varying smoothness}\RSlabel{SecNumResObsVm}
We now check the behavior of the greedy method when the smoothness parameter $m$
of the Hilbert spaces $W_2^m(\R^d)$ changes. In view of
\eref{eqhDexp} and \eref{eqhBexp}, and since we saw before that
the greedy method focuses on the differential operator and the domain,
not on boundary values, Figure \RSref{Fig031500prunlamdelta}
shows rates $-0.67$ and $-0.71$ as functions of $m$ that
confirm the $-(m-3)/2$ rate of \eref{eqhDexp}
for both $\sigma_{\Lambda_{500}}$ and $\rho_{\Lambda_{500}}$ as functions of $m$
after 500 steps of the greedy method.
\begin{figure}[hbt]
\begin{center}
\includegraphics[height=6.0cm,width=6.0cm]{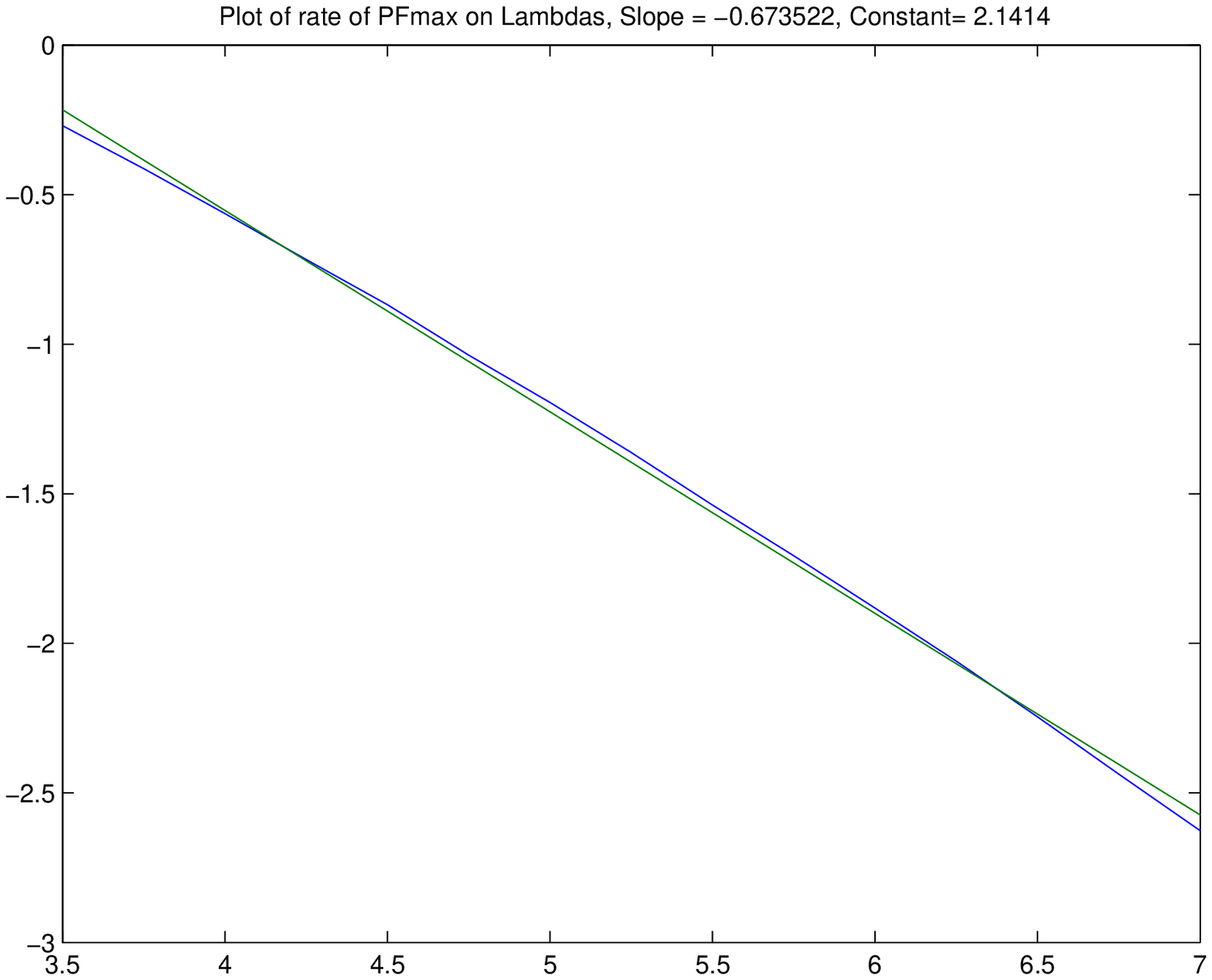}
\includegraphics[height=6.0cm,width=6.0cm]{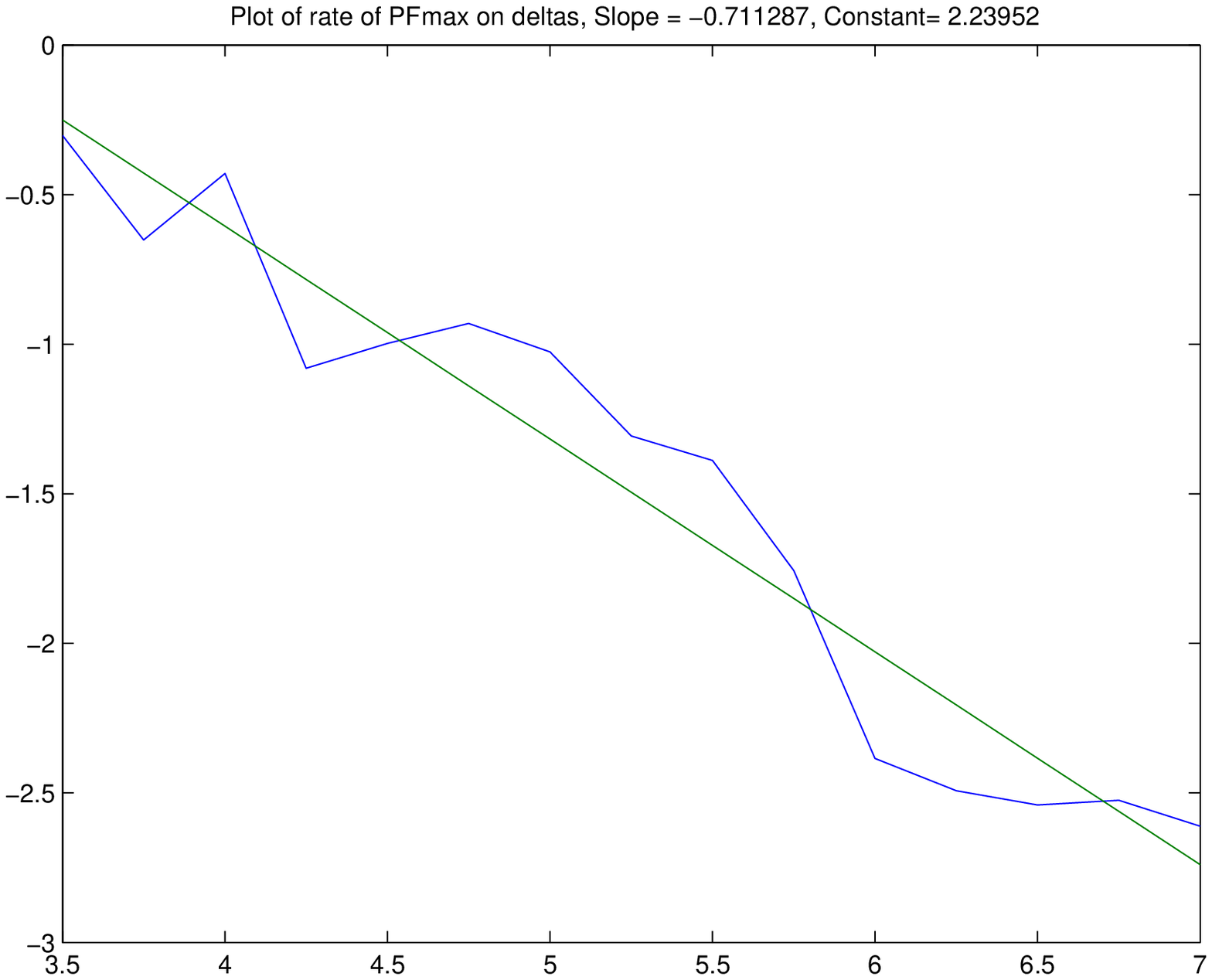}
\end{center}
\caption{$\sigma_{\Lambda_{500}}$ and $\rho_{\Lambda_{500}}$
  as functions of smoothness $m$, with rates
\RSlabel{Fig031500prunlamdelta}}
\end{figure} 
\subsection{Use of Basis Functions}\RSlabel{SecNumResUBF}
To get examples of solving specific Poisson problems via \eref{equNtilde} using the basis
obtained by the Greedy Method, we selected a run for smoothness $m=6$
to generate the basis first, the other parameters being as in the previous
examples. Then the functions $f$ and $g$ of
\eref{eqLufg} were defined to let the true solution be of the form
$K(\cdot,z)$ for a different kernel $K$ and a point $z=(-\pi/10, 0)$.
This allows to check cases with solutions of different smoothness.
\biglf
The first case is the infinitely smooth situation where $K$ is a Gaussian.
Figure \RSref{figg201} shows a very fast decay of the error, a fast increase of
the cumulative sum of the coefficients $\mu_j^2(u)$ from \eref{equNtilde},
and the final  maximal error $8\cdot 10^{-6}$, roughly.
A less smooth solution is of the form $r^{2.5}$ with $r=\|\cdot-z\|_2$
with a derivative singularity at $z$, and the corresponding results are in
Figure \RSref{figq201}. The $\mu_j^2$ decay much more slowly, and the
solver has to fight with the derivative singularity at $z$. The two cases
were aligned by factors to start with an error of roughly one
using the same precalculated basis. Recall that
classical Harmonic Analysis shows that decay rates
of coefficients of orthonormal expansions depend on smoothness and
determine convergence  rates. 
\begin{figure}[hbt]
\begin{center}
\includegraphics[height=4.0cm,width=4.0cm]{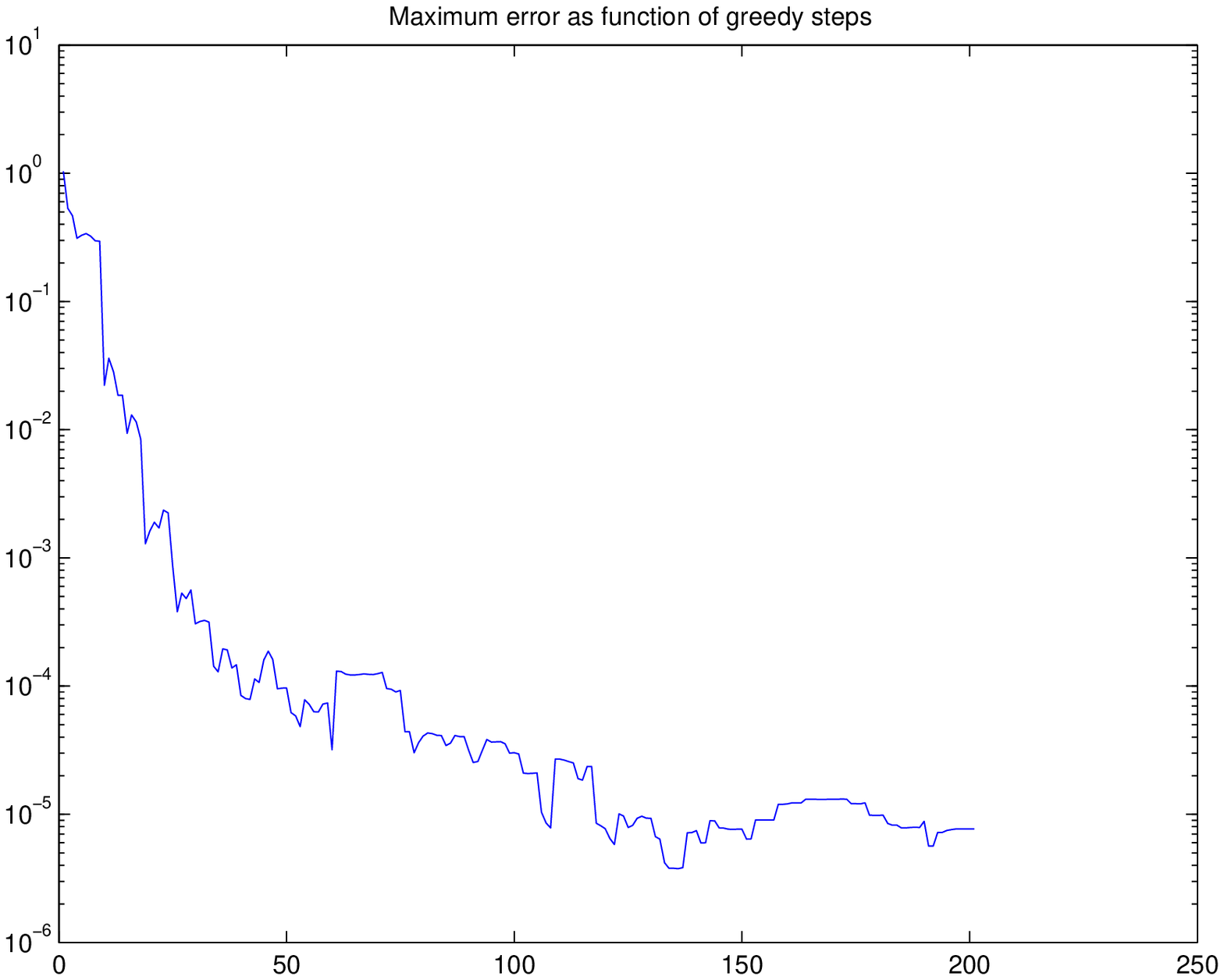} 
\includegraphics[height=4.0cm,width=4.0cm]{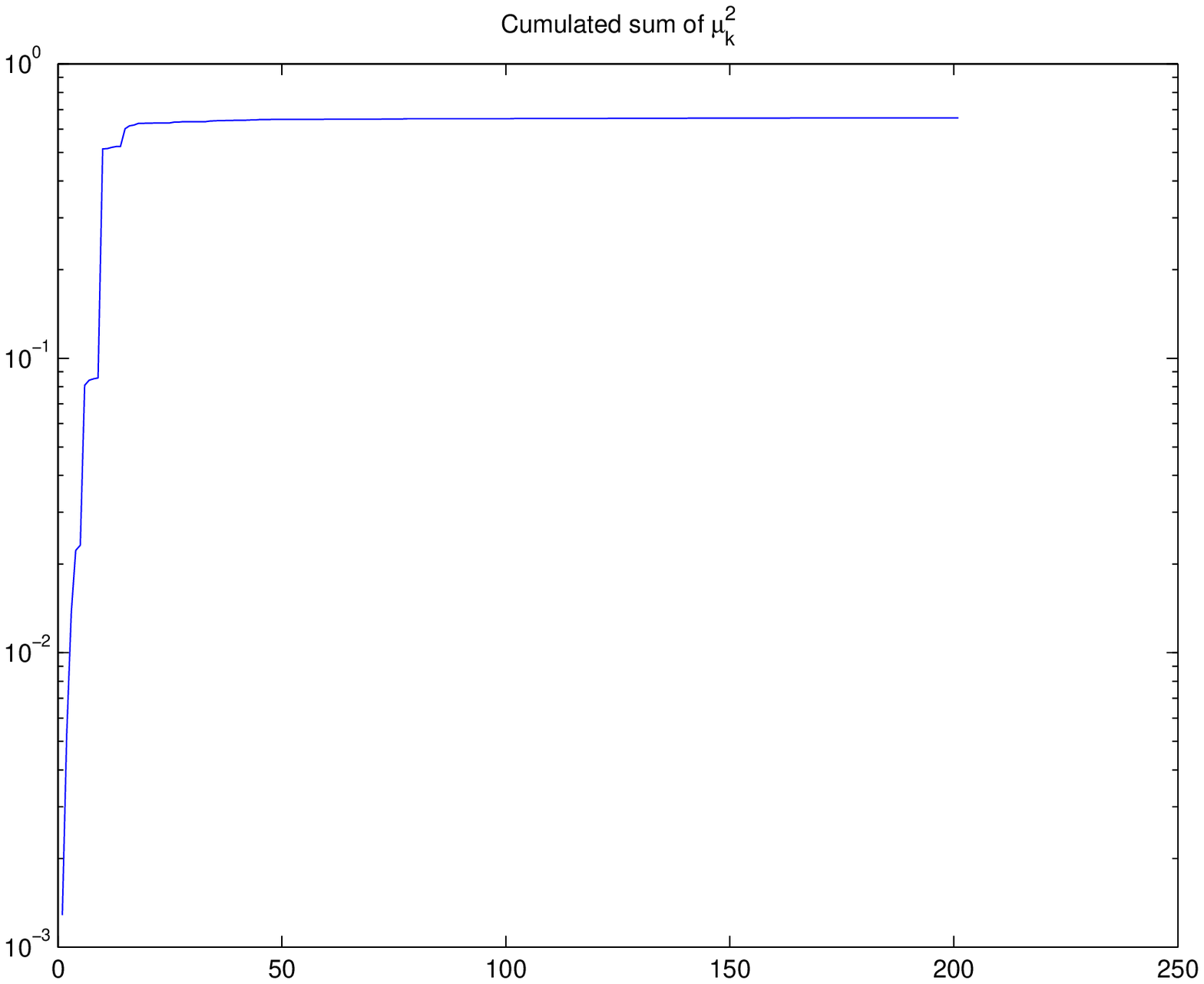} 
\includegraphics[height=4.0cm,width=4.0cm]{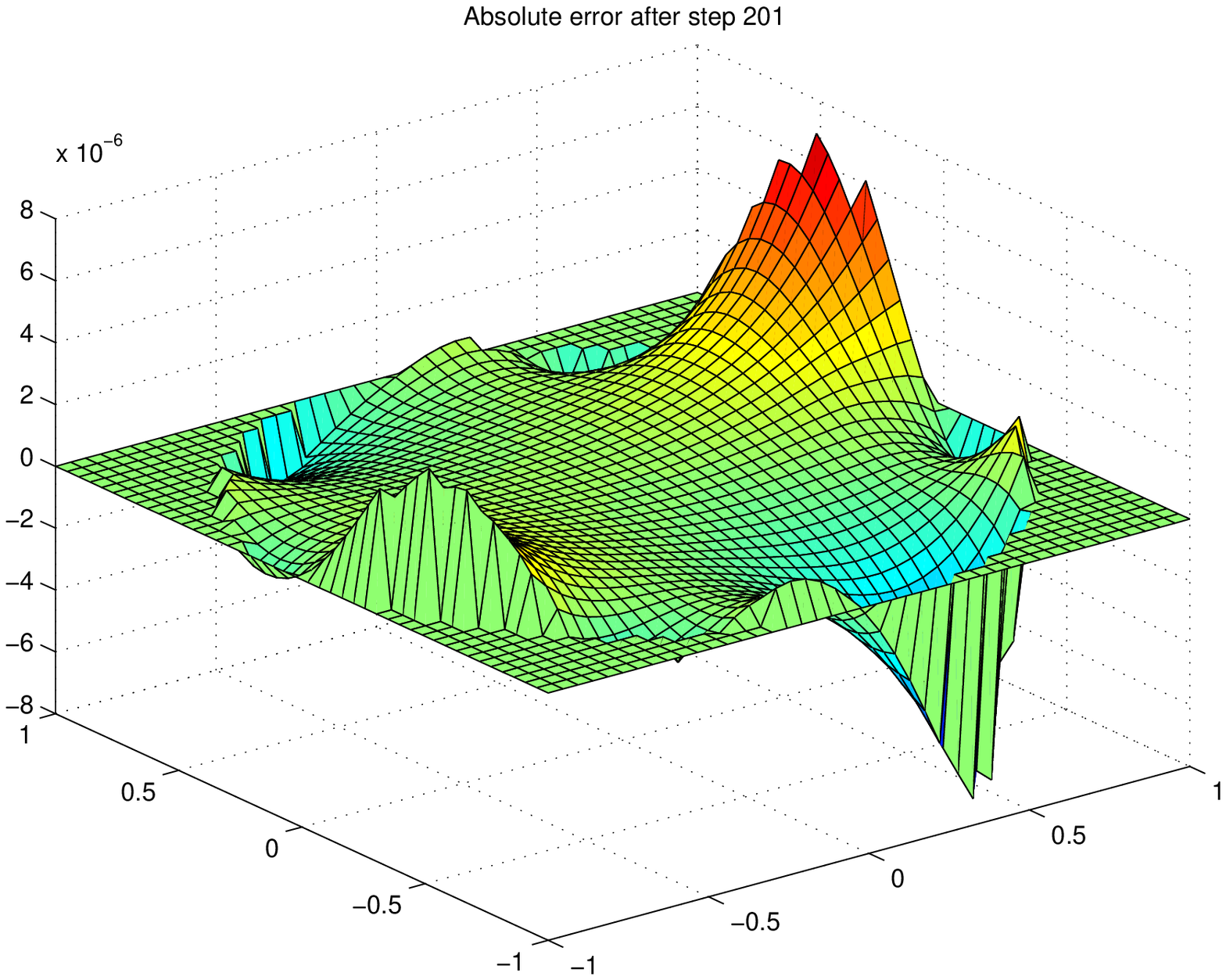} 
\end{center}  
\caption{Maximum error and cumulative sum of the $\mu_j^2(u)$ as functions of $N$, and
  final error about  $8\cdot 10^{-6}$, Gaussian case\RSlabel{figg201}} 
\end{figure}

\begin{figure}[hbt]
\begin{center}
\includegraphics[height=4.0cm,width=4.0cm]{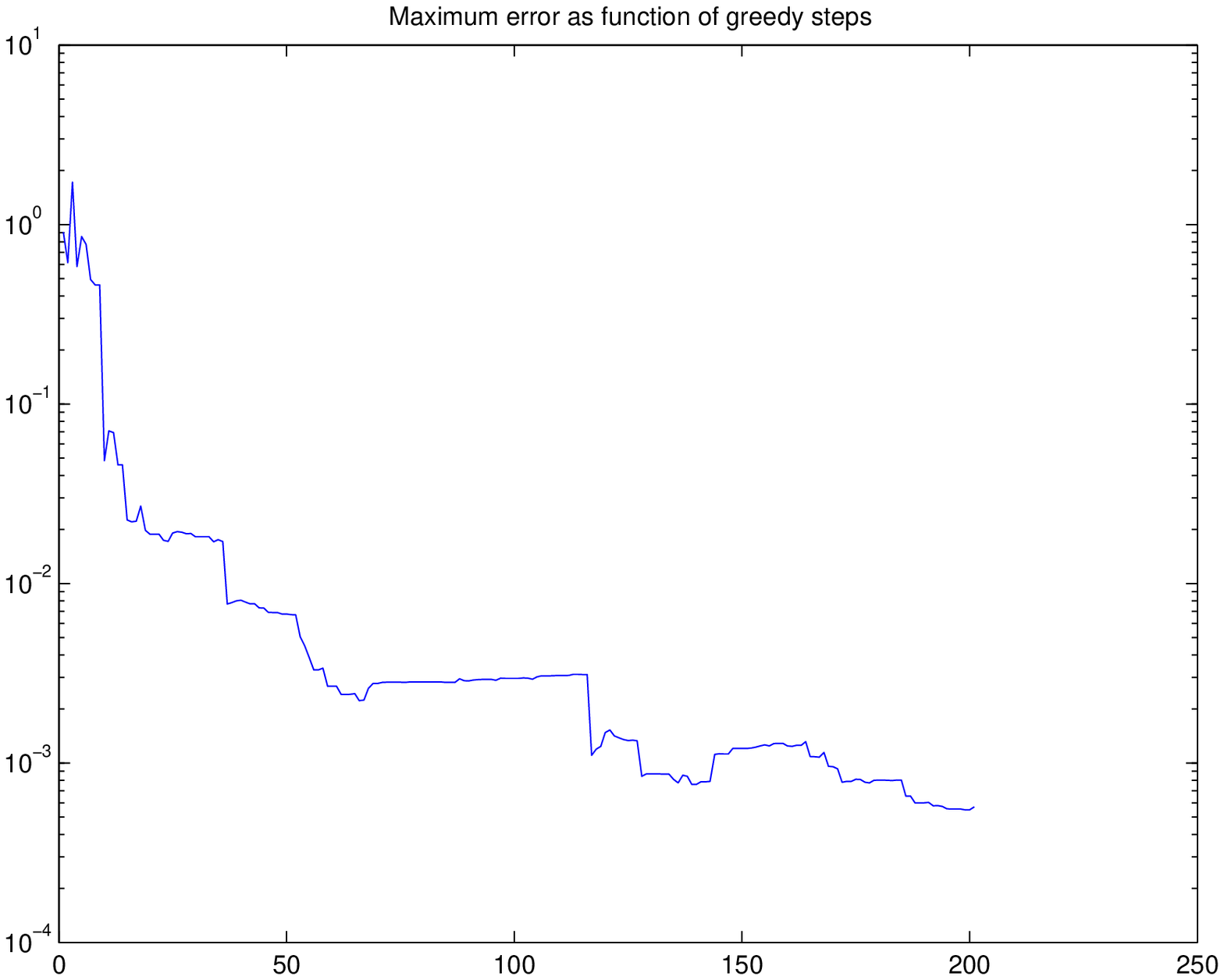} 
\includegraphics[height=4.0cm,width=4.0cm]{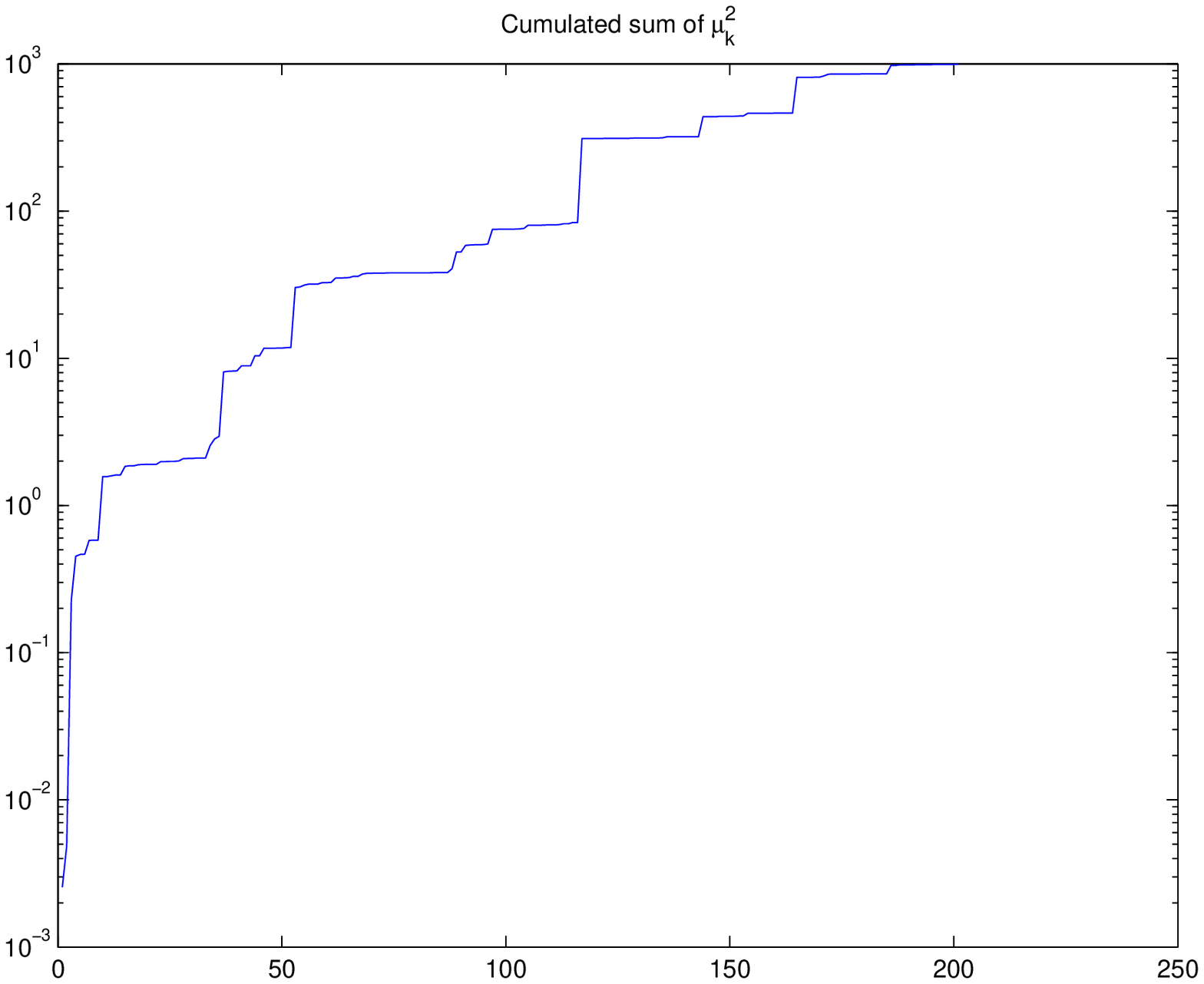} 
\includegraphics[height=4.0cm,width=4.0cm]{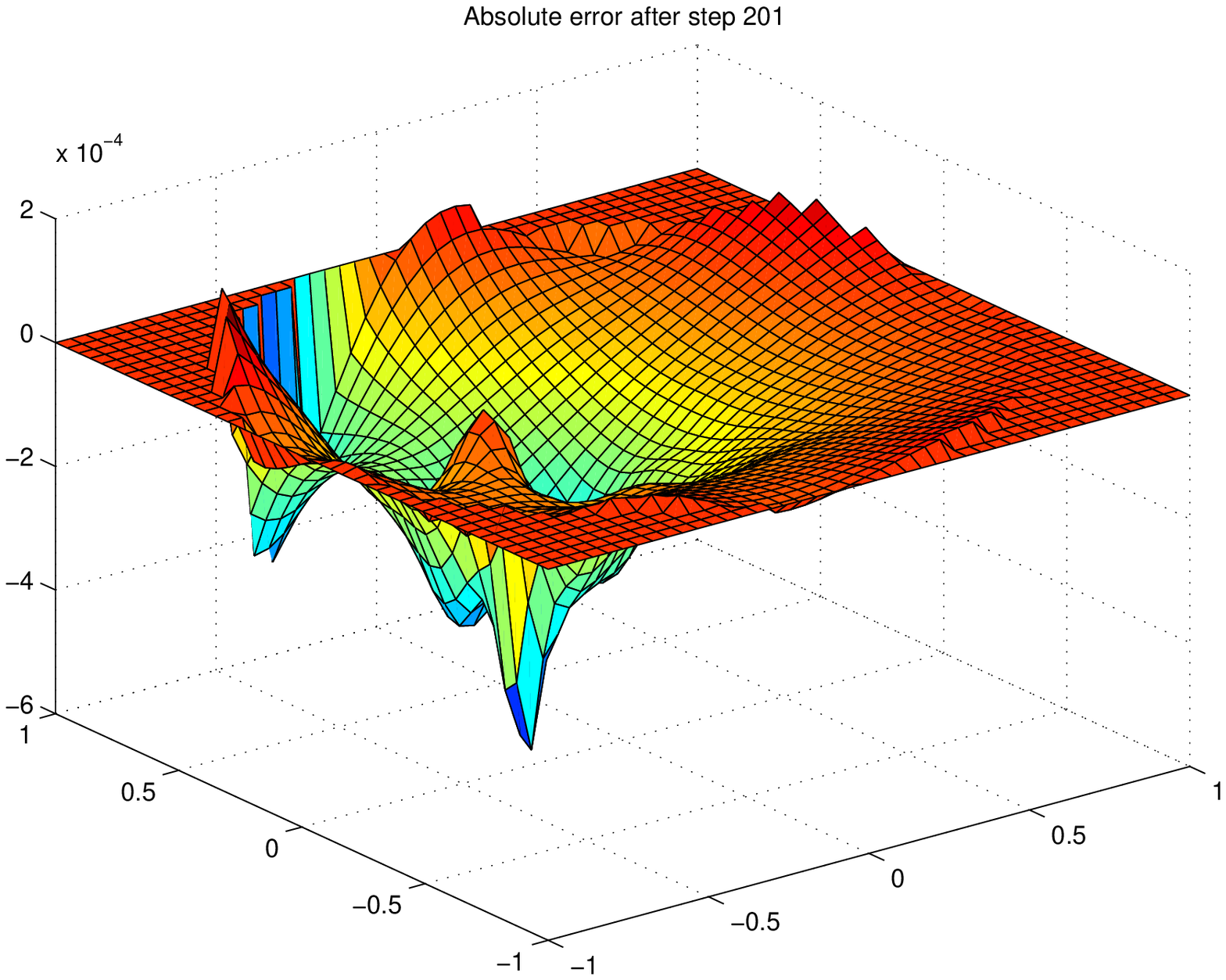} 
\end{center}  
\caption{Maximum error and cumulative sum of the $\mu_j^2(u)$ as functions of $N$, and
  final error about  $4\cdot 10^{-5}$, Power case $r^{2.5}$\RSlabel{figq201}} 
\end{figure}
\subsection{The Extended Greedy Method}\RSlabel{SecNumResExtGM}
The Extended Greedy Method from Section
\RSref{SecVGM} was run in the same situations as in Sections
\RSref{SecNumResObsCVN} and \RSref{SecNumResObsVm}.
The results are given
from Figure \RSref{Fig040301500p3rates} on. 
\begin{figure}[hbt]
\begin{center}
\includegraphics[height=4.0cm,width=6.0cm]{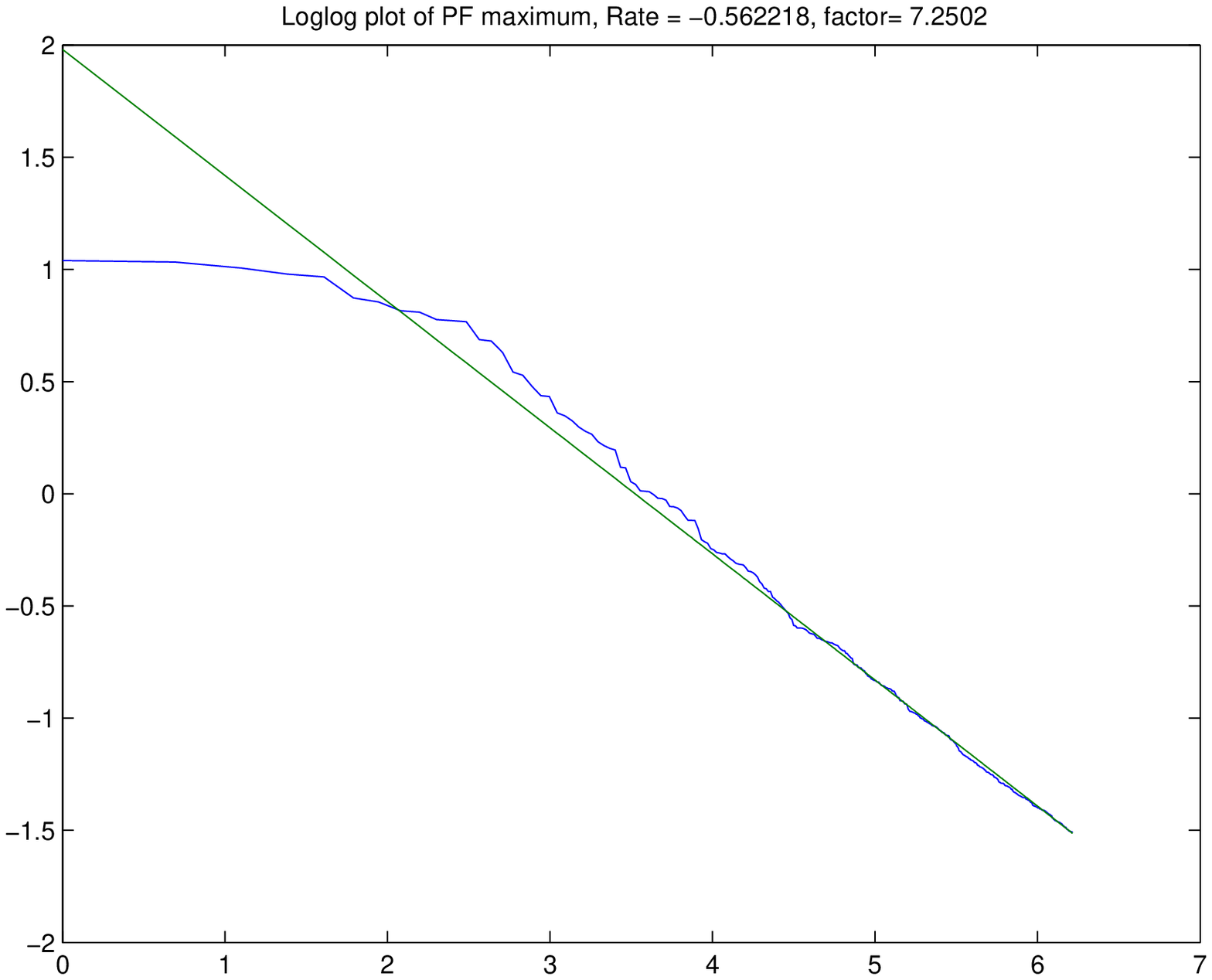} 
\includegraphics[height=4.0cm,width=6.0cm]{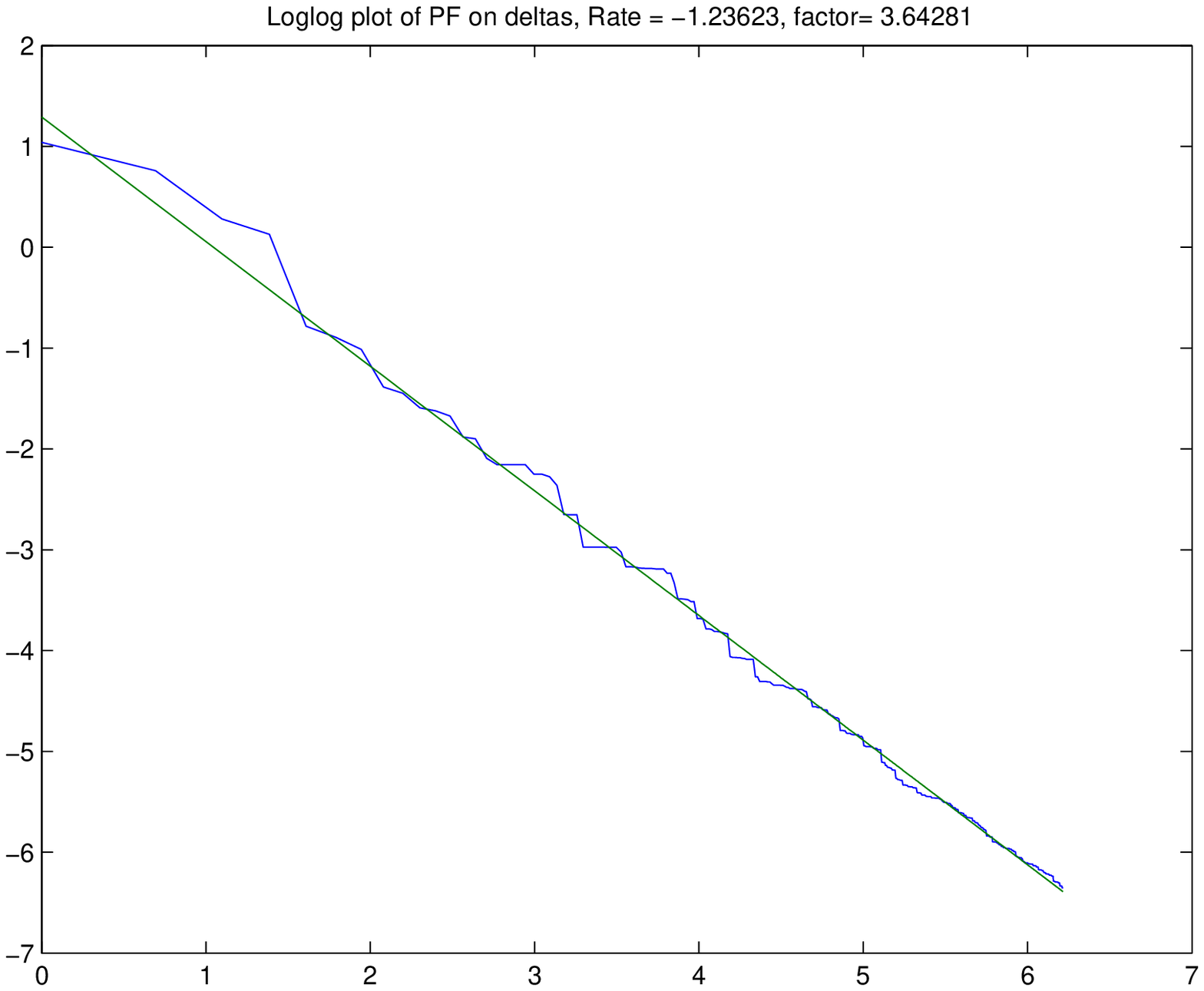} 
\end{center}
\caption{$\sigma_{\Lambda_N}$ and $\rho_{\Lambda_N}$
  and their rates as functions of $N$, for the extended Greedy Method
  \RSlabel{Fig040301500p3rates}}
\end{figure} 
\biglf
The rates in the left parts of
Figures \RSref{Fig020301500p3rates} and \RSref{Fig040301500p3rates} are similar,
but not in the right parts. 
Since the extended method picks more boundary points than the original method,
the convergence rate on the delta functionals is now much better. Similarly, 
Figures \RSref{Fig020301500p3both} and \RSref{Fig040301500p3both} differ
considerably. The error in the interior decays much better, and the
fill distances in the domain and on the boundary show a better alignment.  
\begin{figure}[hbt]
\begin{center}
\includegraphics[height=4.0cm,width=6.0cm]{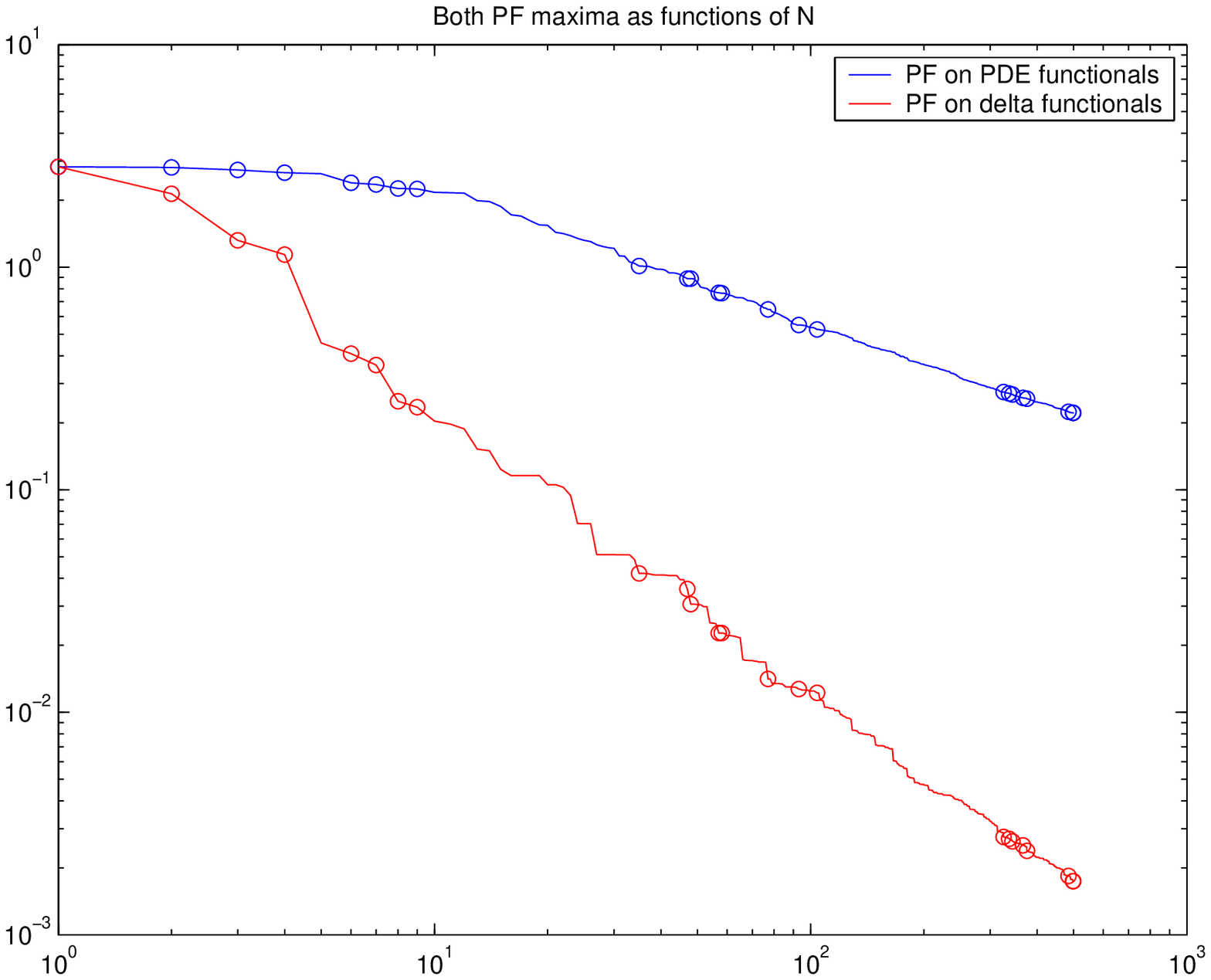}\\ 
\includegraphics[height=4.0cm,width=6.0cm]{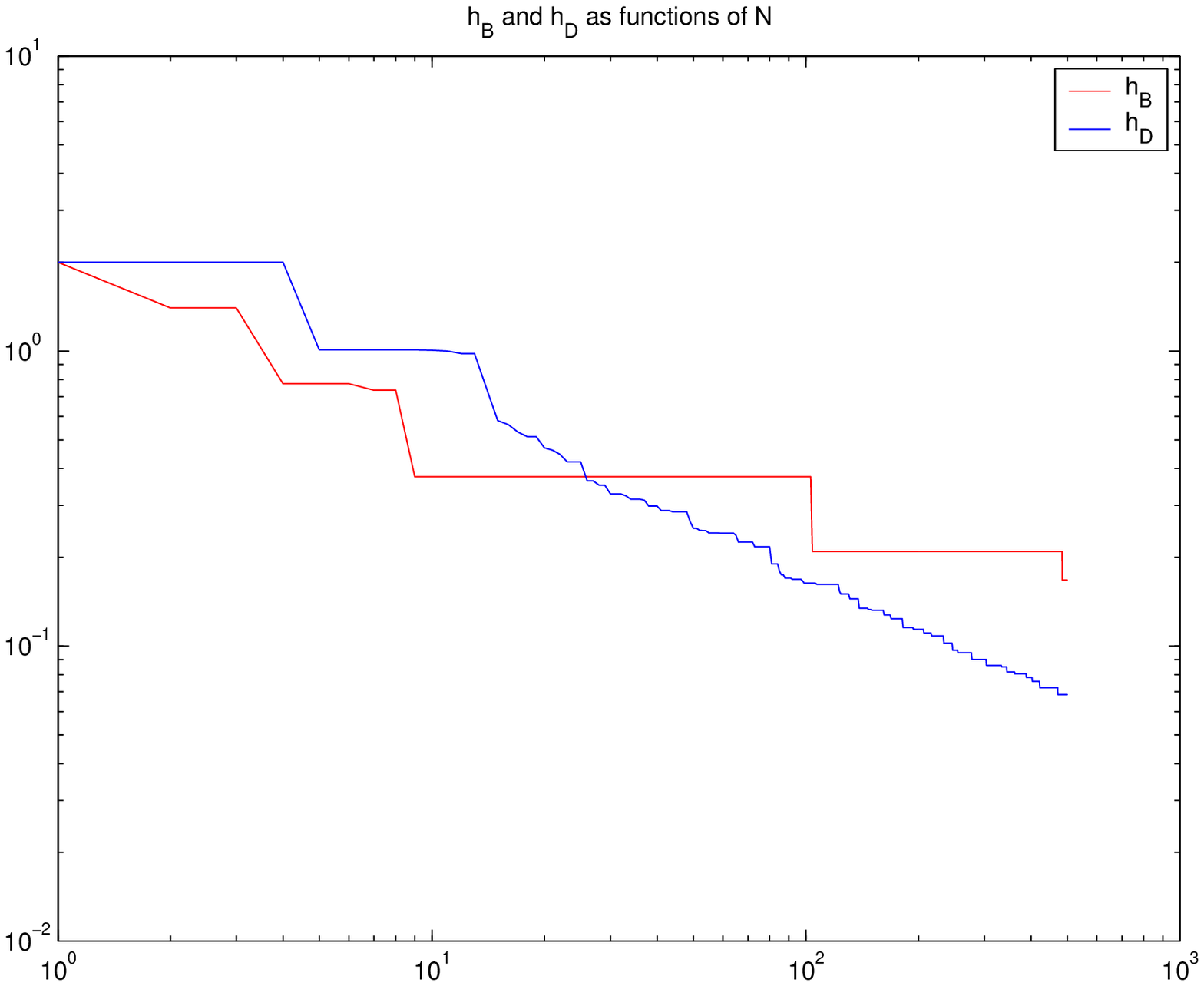} 
\end{center}
\caption{$\sigma_{\Lambda_N}$ and $\rho_{\Lambda_N}$ (top)
  compared with $h_\Gamma(N)$ and $h_\Omega(N)$ (bottom) 
  as functions of $N$, for the extended Greedy Method. \RSlabel{Fig040301500p3both}}
\end{figure}

\biglf
Figure \RSref{Fig040301500p3PF} adds a plot of the Power Function
on the domain to the three plots of Figure \RSref{Fig020301500p3PF}, and it
should be compared to Figure \RSref{Fig020301500p3NB}.
One can see that the improved selection of boundary functionals now avoids large
values of the Power Function on the boundary.
The other results are very similar, and plots are omitted, except for
the condition. The extended method is less stable, if 
Figures \RSref{Fig020301500p3cond} and
\RSref{Fig040301500p3cond} are compared. 

\begin{figure}[hbt]
\begin{center}
\includegraphics[height=8.0cm,width=12.0cm]{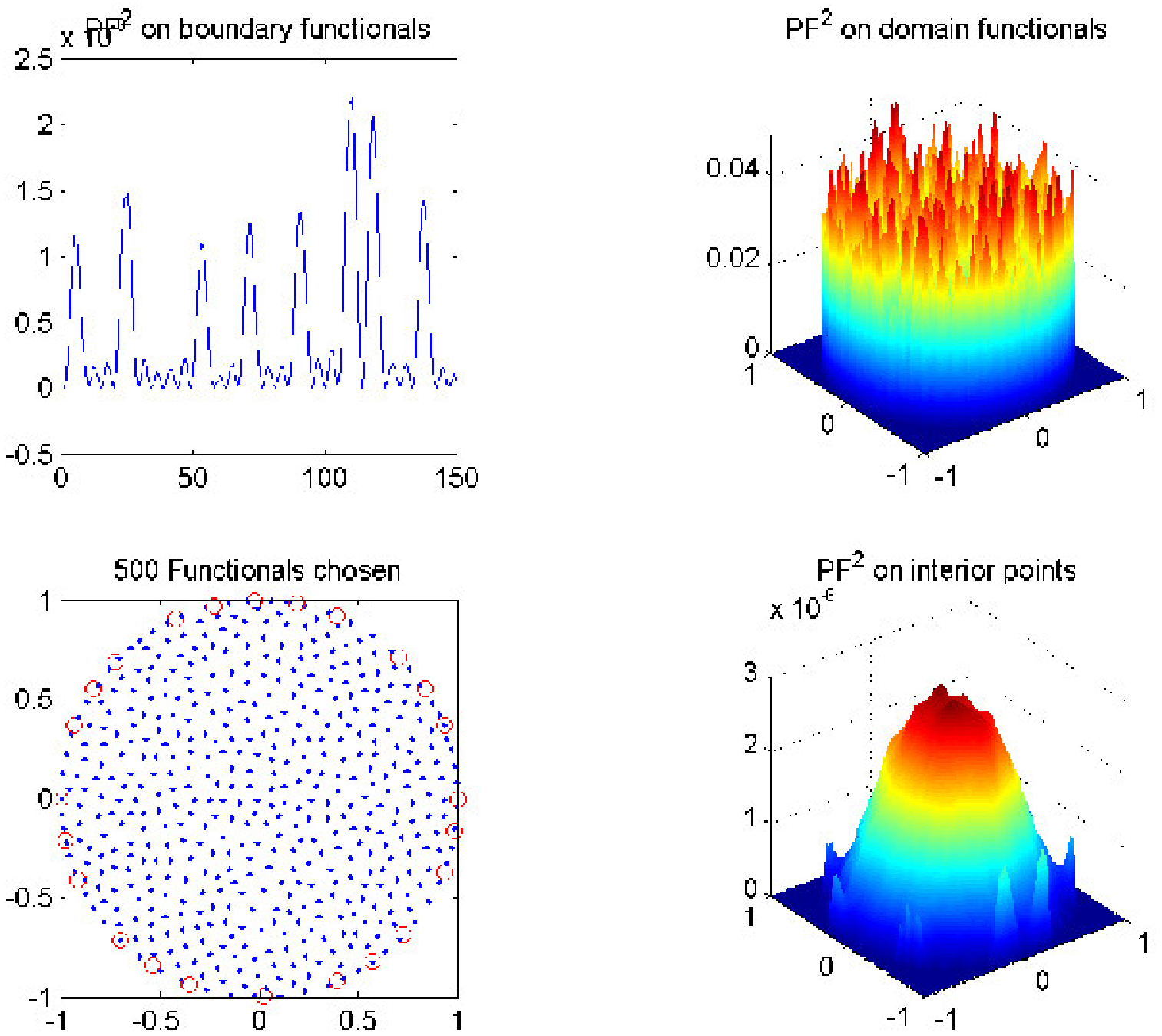}\\ 
\end{center}
\caption{$P^2_{\Lambda_{500}}$ on the boundary functionals, the domain
  functionals, and the selected 500 functionals, for the extended Greedy Method.
\RSlabel{Fig040301500p3PF}}
\end{figure} 
\begin{figure}[hbt]
\begin{center}
\includegraphics[height=6.0cm,width=6.0cm]{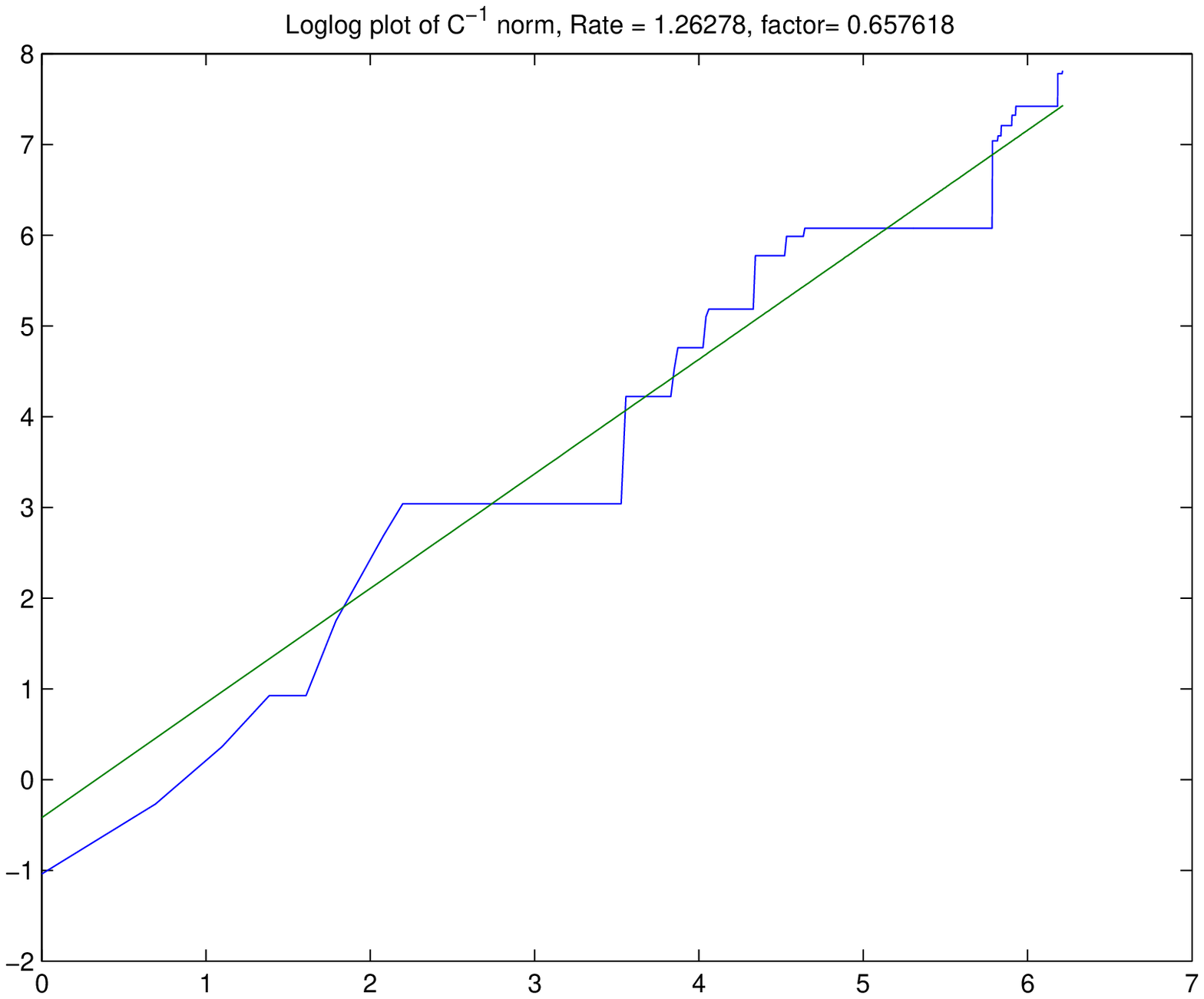}
\includegraphics[height=6.0cm,width=6.0cm]{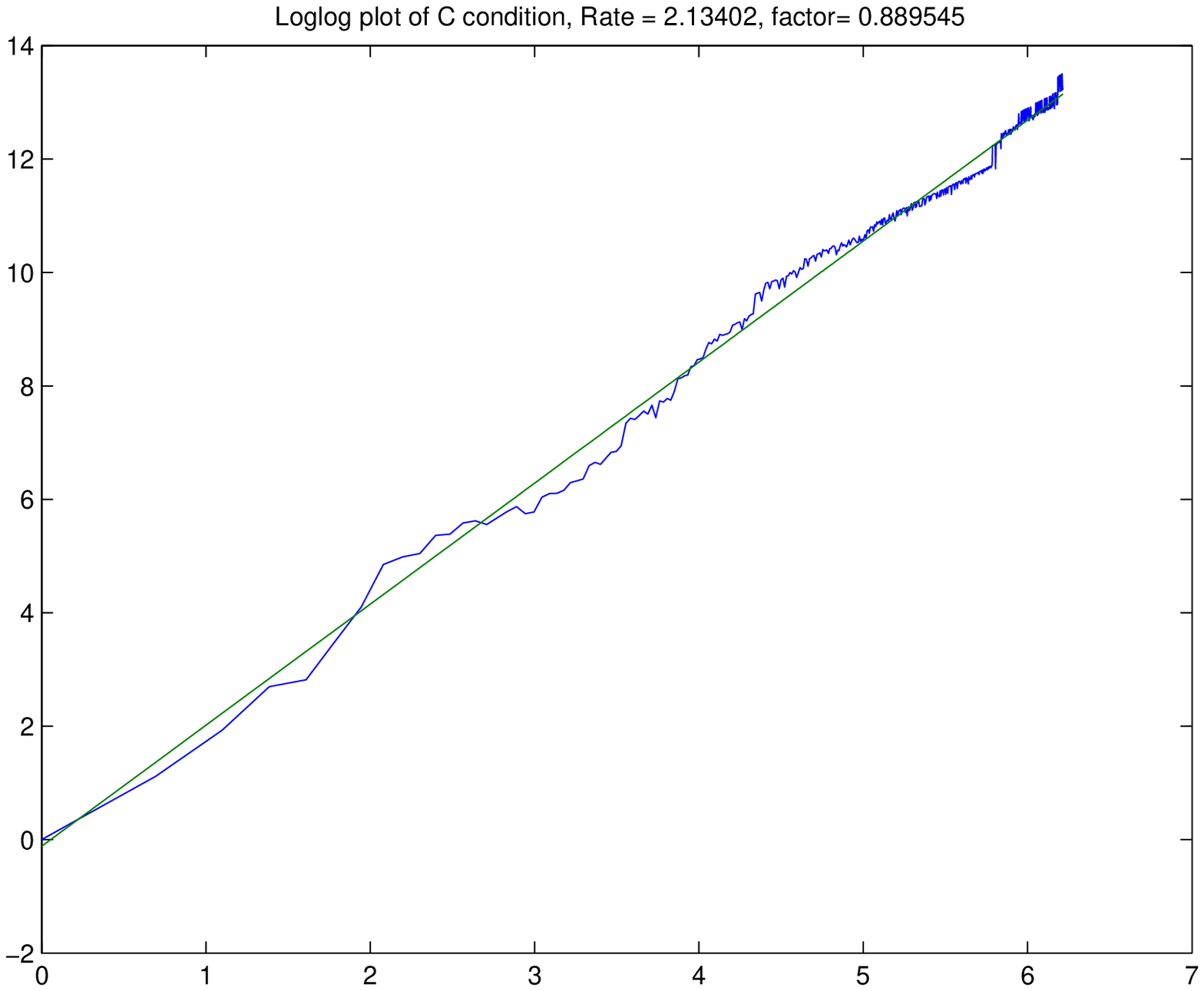} 
\end{center}
\caption{Norm and condition estimate of matrix $C(N)$ as functions of $N$,
  for the extended Greedy Method.
\RSlabel{Fig040301500p3cond}}
\end{figure} 
\biglf
From Lemma \RSref{Lemrhosig}
one might hope that the extended Greedy Method performs better as a function of
smoothness, but this cannot be supported by experiments.
The rates for the situation corresponding to Figure \RSref{Fig031500prunlamdelta} come out to
be roughly -0.69 and -0.71, respectively, but the right-hand plot is much
smoother. The actual plot is suppressed.
\section{Summary and Open Problems}\RSlabel{SecSaOP}
Roughly, the $P$-greedy method for solving Dirichlet problems for
second-order elliptic
operators in $W_2^m(\R^d)$ seems to behave like the comparable
$P$-greedy method
for interpolation of functions in $W_2^{m-2}(\R^d)$, with all its pros and cons.
It focuses on the domain, not on the boundary, and it tends to
produce an asymptotically uniform distribution of evaluation points there,
with an unexpectedly small number of points for sampling the boundary values.
\biglf
On the theoretical side, this opens the quest for a thorough analysis
of Kolmogoroff $N$-widths for such PDE problems. A reasonable hypothesis is
that these behave like those without differential operators, but
for spaces of functions with lower-order smoothness. 
\biglf
Another observation is that the
maximum of the generalized Power Function taken
on all delta functionals, being a central quantity for
pointwise error bounds, shows the same asymptotics as the
maximum of the generalized Power Function taken on all chosen
PDE data functionals. This is no surprise for
well-posed problems, but it opens a way for
explicitly computable factors for error bounds in terms of the
Hilbert space norm of the true solution.
\biglf
We supplied an Extended Greedy Method that cares for the delta
functionals in a better way, but it falls out of the Hilbert space foundation,
so far. Its analysis is another open problem.
\biglf
Using the standard Hilbert space background
\RScite{schaback:2009-8,hon-schaback:2013-1}, we also can formulate
a $P$-greedy method for solving Dirichlet problems for harmonic
functions in 2D or 3D. It will follow the Kolmogoroff $N$-width
theory for such cases, but the latter seems to be open.
\biglf
There are other greedy techniques on the market
(``$f$--greedy'' and ``$f/P$-greedy'') that apply to specific
problems of the form \eref{eqLufg}, not uniformly to the whole class.
But their convergence analysis is less far developed,
see \RScite{santin-haasdonk:2017-1}.
\subsection*{Acknowledgment} Special thanks go to Gabriele Santin for
several helpful remarks.

\bibliographystyle{plain}

\end{document}